\documentclass[oneside,12pt]{amsart}
\usepackage{amssymb, amscd}
\usepackage[all]{xy}
\usepackage{dsfont}

\setlength{\vfuzz}{2mm}
\setlength{\textwidth}{160mm}
\setlength{\textheight}{200mm}
\setlength{\oddsidemargin}{0pt}
\setlength{\evensidemargin}{0pt}
\usepackage{a4wide}
\usepackage[english]{babel}
\usepackage{amsfonts}
\usepackage{amsmath}
\usepackage{amsthm}
\newtheorem{theorem}{Theorem}[section]
\newtheorem{definition}[theorem]{Definition}
\newtheorem{proposition}[theorem]{Proposition}
\newtheorem{lm}[theorem]{Lemma}
\newtheorem{corollary}[theorem]{Corollary}
\newtheorem{conjecture}[theorem]{Conjecture}

\newtheorem{bieberbach}[theorem]{Bieberbach}
\newtheorem{ex}[theorem]{Example}
\newtheorem{remark}[theorem]{Remark}

\title{Fiberwise volume decreasing diffeomorphisms on product manifolds}
\author{Dennis Dreesen}
\address{KULAK \\ Wiskunde \\ E. Sabbelaan 53, B-8500 Kortrijk}
\email{Dennis.Dreesen@kuleuven-kortrijk.be}
\thanks{Dennis Dreesen is a research assistant for FWO-Flanders}
\author{Nansen Petrosyan}
\address{KULAK \\ Wiskunde \\ E. Sabbelaan 53, B-8500 Kortrijk}
\email{Nansen.Petrosyan@kuleuven-kortrijk.be}
\thanks{Nansen Petrosyan was supported by the Research Fund K.U.Leuven}
\DeclareMathOperator{\vol}{Vol}
\DeclareMathOperator{\fvdc}{FVD}
\DeclareMathOperator{\Diffeo}{Diffeo}
\DeclareMathOperator{\Iso}{Iso}
\newcommand{\R}{\mathbb{R}}
\newcommand{\N}{\mathbb{N}}
\newcommand{\Q}{\mathbb{Q}}
\newcommand{\Z}{\mathbb{Z}}

\newcommand{\cmd}{\backslash}
\newcommand{\Diff}{\mbox{Diffeo}}
\newcommand{\aste}{\textasteriskcentered}

\begin{document}
\maketitle
\begin{abstract}
Given a closed connected Riemannian manifold $M$
and a connected Riemannian manifold $N$, we study fiberwise, i.e. $M\times \{z\}, z\in N$,
volume decreasing diffeomorphisms on the product $M\times N$. Our main theorem shows that
 in the presence of certain cohomological condition on $M$ and $N$
such diffeomorphisms must map a fiber diffeomorphically onto another fiber and are therefore
fiberwise volume preserving. As a first corollary, we show that
the isometries of $M\times N$ split. We also study
properly discontinuous actions of a discrete group on $M\times N$. In this case, 
we generalize the first Bieberbach theorem and prove a special case of an extension of Talelli's conjecture.
\end{abstract}
\maketitle
\section{Introduction \label{sc:introduction}}
When one studies a problem on a product manifold $M\times N$, then
it is convenient if this problem ``reduces'' to
two separate problems concerning $M$ and $N$.
We have noticed this in an attempt to generalize the first Bieberbach theorem to a case related to product manifolds.
Here, we needed the fact that under certain conditions the isometries of $M\times N$ split. An isometry is said to {\em split}
if its $M$-component $M\times N \rightarrow M$ is
independent of the $N$-coordinates and its $N$-component $M\times N \rightarrow N$
is independent of the $M$-coordinates. The component mappings can then be seen as isometries of $M$ and $N$, respectively.
In this article, we find conditions on $M$ and $N$ that admit such a splitting of isometries and thus allow for the reduction
of geometric symmetries of $M\times N$ to the components, $M$ and $N$.
At least one intermediate result is useful on its own and will be referred to as our main theorem.


The structure of the article is as follows.
In section \ref{se:preliminaries}, we recall definitions and state the preliminary results.
Our main theorem is proven in section \ref{sc:main}.
Section \ref{sc:corollaries} is concerned with applications of this theorem to properly discontinuous actions.
Throughout the article, $M$ and $N$ are Hausdorff and second countable Riemannian manifolds. Also,
we will only deal with smooth maps,
so differentiable means $C^{\infty}$. We denote $n=\dim(M)$. Let us formulate our results.

If $z\in N$, then $M\times \{z\} \subset M\times N$
is a manifold isometric to $M$ and $\vol(M\times \{z\})=\vol (M)$
(see Definition \ref{definition:nicefamily}).
Let $f:M\times N \rightarrow M\times N$ be a diffeomorphism. We say that $f$ is {\em fiberwise volume
decreasing} at $z$ if
\[ \vol(f(M\times \{z\})) \leq \vol(M),\]
where $f(M\times \{z\})$ has the induced metric from $M\times N$.
A diffeomorphism is fiberwise volume decreasing (fvd) if it is fiberwise volume decreasing at each $z \in N$. Note that isometries
are fvd.

We say that a pair $(M,N)$ satisfies the {\em \aste-condition} if
\[ \pi^*:H^n(M;\Z_2)\rightarrow H^n(M\times N;\Z_2) \]
is an isomorphism. Here, $\pi:M\times N \rightarrow M$ is the natural projection map. We obtain the following result.
\begin{theorem}[(Main Theorem)] Let $M$ be a closed connected Riemannian manifold and let $N$ be a Riemannian manifold
such that $(M,N)$ satisfies the \aste-condition. If $f:M\times N \rightarrow M\times N$ is a diffeomorphism which is fvd at $z\in N$,
then there exists $w\in N$ such that $f(M\times \{z\})=M\times \{w\}$.\label{th:mainin}
\end{theorem}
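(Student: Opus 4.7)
The plan is to set $S := f(M\times\{z\})$, a closed connected smooth $n$-dimensional submanifold of $M\times N$ with $\vol(S)\le \vol(M)$ by the fvd hypothesis, and then analyze the restricted projection $\pi|_S: S \to M$. The strategy is to show simultaneously that $\pi|_S$ has $\Z_2$-degree one and that its Jacobian is pointwise bounded by $1$. Combined via the area formula, these force a chain of equalities whose rigidity will pin $S$ down to a single horizontal slice.

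First I would show that the \aste-condition produces a degree-one map. Let $\iota_z:M\to M\times N$, $x\mapsto (x,z)$. Since $\pi\circ\iota_z = \mathrm{id}_M$, functoriality gives $\iota_z^*\circ \pi^* = \mathrm{id}$ on cohomology; combined with $\pi^*$ being an isomorphism on $H^n(\cdot;\Z_2)$, this makes $\iota_z^*$ the two-sided inverse of $\pi^*$ there. Since $f^*$ is an isomorphism (as $f$ is a diffeomorphism), the composite $g := \pi\circ f\circ\iota_z:M\to M$ satisfies $g^* = \iota_z^*\circ f^*\circ\pi^*$ is an isomorphism on $H^n(M;\Z_2)$, i.e.\ $g$ has $\Z_2$-degree one. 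Factoring $g = (\pi|_S)\circ(f|_{M\times\{z\}})$ and noting the second factor is a diffeomorphism, $\pi|_S$ too has $\Z_2$-degree one.

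Next, at any $p=(x,z')\in S$, the differential $d\pi_p$ is the orthogonal projection $T_xM\oplus T_{z'}N\to T_xM$, which is $1$-Lipschitz. Applying Hadamard's inequality to an orthonormal basis of $T_pS$ gives $|\mathrm{Jac}(\pi|_S)|(p)\le 1$. Because $\pi|_S$ has odd mod-$2$ degree, every regular value has an odd, hence positive, number of preimages, so the area formula produces the chain
\[
\vol(M)\;\le\;\int_M \#(\pi|_S)^{-1}(y)\,dy \;=\;\int_S |\mathrm{Jac}(\pi|_S)|\,dS \;\le\;\vol(S)\;\le\;\vol(M).
\]
All inequalities must be equalities. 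Continuity upgrades this to $|\mathrm{Jac}(\pi|_S)|\equiv 1$, and the Hadamard equality case forces $T_pS = T_xM\oplus\{0\}$ at every $p\in S$. Hence the projection $S\to N$ has vanishing differential everywhere and is constant on the connected set $S$; this gives $S\subset M\times\{w\}$ for some $w\in N$, and since both are closed connected $n$-submanifolds, $S = M\times\{w\}$.

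The main obstacle I anticipate is the first step: turning the purely cohomological \aste-condition into the geometric statement that $\pi|_S$ has mod-$2$ degree one. The key trick is to compose with the diffeomorphism $f|_{M\times\{z\}}$ so that the computation happens on $M$ itself, where the identity $\pi\circ\iota_z=\mathrm{id}_M$ converts the \aste-condition directly into an inverse for $\pi^*$. The remaining analytic rigidity (Hadamard equality plus connectedness) is then essentially forced.
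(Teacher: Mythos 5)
Your argument is correct, and it rests on the same underlying mechanism as the paper's proof: the projection $\pi$ cannot increase $n$-dimensional volume pointwise, while the \aste-condition forces $\pi$ restricted to $f(M\times\{z\})$ to cover all of $M$, so the fvd hypothesis pins the volume and the equality case forces horizontality. The assembly, however, is genuinely different. The paper extracts only \emph{surjectivity} of $\pi\circ f\circ i$ from the \aste-condition (Proposition \ref{proposition:surj}, via Poincar\'e--Lefschetz duality and Corollary \ref{cor:sur}), decomposes $M$ minus the critical values into a nice family of opens over each of which $\pi$ restricts to a diffeomorphism, and sums the local inequality $\vol(\phi^{-1}(C))\geq\vol(C)$ of Proposition \ref{pr:highervolume} --- whose proof is exactly your Jacobian bound, phrased as $\det(G+H)\geq\det(G)$ for $H$ positive semi-definite (Lemma \ref{lm:posdef}). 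The equality case is then closed by contradiction: a connectedness argument on the two open sets $\{Dp\neq 0\}$ and $\{D\pi \text{ invertible}\}$ produces a point where both hold, hence a nice open with \emph{strict} local inequality. You instead upgrade the cohomological input to mod-$2$ degree one (a correct strengthening, since $g^*$ is the identity on $H^n(M;\Z_2)\cong\Z_2$ and degree is multiplicative under the diffeomorphism $f|_{M\times\{z\}}$), invoke the area formula with multiplicity, and close the equality case by global rigidity: $|\mathrm{Jac}(\pi|_S)|\equiv 1$ forces $T_pS=T_xM\oplus\{0\}$ everywhere, so $p|_S$ is constant on the connected set $S$. Your route buys cleaner bookkeeping --- the multiplicity is handled automatically, there is no need to choose the nice family compatibly with the hypotheses of Proposition \ref{pr:highervolume}, and no separate connectedness/contradiction step --- at the cost of importing the area formula and mod-$2$ degree theory, where the paper stays with elementary volume computations in charts.
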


Under the assumptions of the theorem, one can show that the collection of fiberwise volume decreasing self-maps
of $M\times N$, denoted by $\fvdc(M\times N)$, is a group. To describe its structure, we
define the following map $\psi$ whose definition does not depend on the chosen $y_0\in M$.
\[ \begin{array}{cccl}
\psi: & \fvdc(M\times N) &\rightarrow & \Diffeo(N) \\
 & g &\mapsto & \widetilde{p\circ g} ,
 \end{array} \]
where $p:M\times N \rightarrow N$ is the natural projection and
\[ \begin{array}{cccl}
\widetilde{p\circ g}: & N & \rightarrow & N \\
&  z& \mapsto & p\circ g(y_0,z).
\end{array} \]
For every element of $K:=\{ f:N\rightarrow \Diffeo(M) \mid f \mbox{ is Fr\'echet differentiable} \}$, we can associate the map
\[
\begin{array}{cccl}
\overline{f}: & M\times N & \rightarrow & M \\
 & (y,z)& \mapsto & f(z)(y)
 \end{array}
 \]
and so $(\overline{f},p)\in \fvdc(M\times N)$.
We obtain the following exact sequence.
\begin{theorem}
The sequence:
\[ 1 \rightarrow K \hookrightarrow \fvdc(M\times N) \stackrel{\psi}{\rightarrow}
\Diff(N) \rightarrow 1 \] 
is short exact.
\end{theorem}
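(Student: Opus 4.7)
The plan is to extract from the Main Theorem three ingredients: (i) every $g\in\fvdc(M\times N)$ permutes the fibers $\{M\times\{z\}\}_{z\in N}$, so $\psi$ is well defined; (ii) $\psi$ is a group homomorphism to $\Diff(N)$; (iii) $\ker\psi$ consists of the elements fixing every fiber setwise, which are exactly the elements in the image of $K$. The three claims of exactness then follow in order.

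To verify that $\psi$ is a well defined group homomorphism, I apply Theorem~\ref{th:mainin} at each $z\in N$ to obtain a unique $w=:\psi(g)(z)\in N$ with $g(M\times\{z\})=M\times\{w\}$; in particular $p\circ g(y_0,z)=w$ for every $y_0\in M$, so the formula for $\widetilde{p\circ g}$ is independent of the chosen base point, and its smoothness in $z$ follows from that of $g$ and $p$. Because $g$ is a diffeomorphism of $M\times N$ that maps each fiber into another fiber, its differential is block upper triangular in the splitting $TM\oplus TN$, so the restriction of $g$ to any fiber is a diffeomorphism onto the image fiber. Consequently $g^{-1}$ also permutes fibers, each image is a product slice of volume $\vol(M)$, and so $g^{-1}\in\fvdc(M\times N)$ with $\widetilde{p\circ g^{-1}}$ the two-sided inverse of $\widetilde{p\circ g}$. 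Thus $\psi(g)\in\Diff(N)$, and the homomorphism property follows from $gh(M\times\{z\})=M\times\{\psi(g)(\psi(h)(z))\}$.

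Surjectivity of $\psi$ is witnessed by the explicit section $\phi\mapsto\bigl[(y,z)\mapsto(y,\phi(z))\bigr]$, which is a diffeomorphism of $M\times N$ sending each fiber isometrically onto another fiber, hence is fvd, and whose $\psi$-image is $\phi$. For exactness at $\fvdc(M\times N)$, $g\in\ker\psi$ iff $g$ preserves every fiber setwise, equivalently $g(y,z)=(h(y,z),z)$ for a smooth $h:M\times N\to M$ with each $h(\cdot,z)\in\Diff(M)$; setting $f(z):=h(\cdot,z)$ yields the required element of $K$ with $g=(\bar f,p)$. Conversely every $(\bar f,p)$ with $f\in K$ preserves each fiber and therefore lies in $\ker\psi$, and the assignment $f\mapsto(\bar f,p)$ is visibly injective.

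The step I expect to be most delicate, and which I would not hand wave, is the equivalence between ordinary $C^\infty$ smoothness of $h:M\times N\to M$ and Fr\'echet differentiability of the curve $z\mapsto h(\cdot,z)$ into $\Diff(M)$. This is the standard exponential-law identification underlying the Fr\'echet Lie group structure on diffeomorphism groups, and it is the sole nontrivial ingredient beyond the Main Theorem needed to identify $\ker\psi$ with $K$.
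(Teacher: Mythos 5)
Your proof follows essentially the same route as the paper: well-definedness of $\psi$ and the homomorphism property via the Main Theorem, surjectivity via the explicit section $\phi\mapsto(\pi,\hat{\phi})$, and identification of the kernel with the fiber-preserving maps $(\overline{f},p)$ for $f\in K$. The only divergence is that you cite the exponential-law equivalence between ordinary smoothness of $\overline{f}$ and Fr\'echet differentiability of $z\mapsto f(z)$ as a standard fact, whereas the paper devotes a separate proposition (with trivializing families and an explicit inductive computation of the differentials $D^l$) to proving it --- you correctly isolate this as the one nontrivial ingredient beyond the Main Theorem.
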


An important application of our main theorem is related to the splitting of isometries of a product manifold.
Cheeger and Gromoll (see \cite{Ch}) have shown that the isometries of $M\times \R^k$ split for any $k\in \N$.
They use the fact that any point of $\R^k$ lies on a line through a given point to eventually show that
isometries map fibers of the form $M\times \{z\}$ to fibers of the same form.
A line in a complete Riemannian manifold $N$
is a geodesic $\gamma :(-\infty, \infty) \to N$ that minimizes the arc length between any two of its points.
The $3$-dimensional Heisenberg group shows that not
even contractible Lie groups with a left-invariant metric need to satisfy this property (see \cite{mar}).
Using Theorem \ref{th:mainin}, we are able to avoid these complications.

\begin{corollary} [(Splitting Theorem)] If $M$ is a closed connected Riemannian manifold and if $N$ is a
connected Riemannian manifold such that $(M,N)$ satisfies
the \aste-condition, then the isometries of $M\times N$ split, i.e. $\mbox{Iso}(M\times N)=\mbox{Iso}(M)\times \mbox{Iso}(N)$. \label{cor:split}
\end{corollary}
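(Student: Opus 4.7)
The plan is to apply the Main Theorem to show that every isometry of $M\times N$ permutes the fibers $M\times\{z\}$, and then to exploit the product metric structure to conclude that such an isometry must split. Given $f\in\Iso(M\times N)$, the map $f$ is fvd at every $z\in N$, since the restriction of $f$ to a fiber is an isometry onto its image and therefore preserves (rather than merely decreases) volume. Theorem \ref{th:mainin} then provides, for each $z\in N$, a uniquely determined point $f_2(z)\in N$ with $f(M\times\{z\})=M\times\{f_2(z)\}$. Smoothness of $f_2$ follows from the identity $f_2(z)=p(f(y_0,z))$ for any fixed $y_0\in M$, and running the same argument on the isometry $f^{-1}$ yields a smooth inverse, so $f_2\in\Diff(N)$. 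Writing $f(y,z)=(f_1(y,z),f_2(z))$ for a smooth $f_1\colon M\times N\to M$ then reduces the corollary to showing that $f_1$ is independent of $z$.

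This $z$-independence step is where I expect the real work to lie. Here I would use $f^{\ast}g=g$ for the product metric $g$. At each $(y,z)$ the differential $df$ decomposes into blocks, and the vanishing of the horizontal--vertical cross term of $f^{\ast}g-g$ gives $g_M(\partial_y f_1\cdot u,\partial_z f_1\cdot v)=0$ for all $u\in T_yM$ and $v\in T_zN$. Restricted to the fiber $M\times\{z\}$, $f$ is an isometry onto $M\times\{f_2(z)\}$, so $\partial_y f_1$ is a linear isometry of $T_yM$, in particular surjective. Therefore $\partial_z f_1\equiv 0$ and $f_1(y,z)=\phi(y)$ for some smooth $\phi\colon M\to M$.

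With this in hand one has $f=\phi\times f_2$. The remaining diagonal components of $f^{\ast}g=g$ reduce to $g_M(d\phi\cdot u,d\phi\cdot u')=g_M(u,u')$ and $g_N(df_2\cdot v,df_2\cdot v')=g_N(v,v')$, showing $\phi\in\Iso(M)$ and $f_2\in\Iso(N)$, so $f\in\Iso(M)\times\Iso(N)$. The reverse inclusion is immediate, since any $(\phi,\psi)\in\Iso(M)\times\Iso(N)$ visibly preserves the product metric. The genuine obstacle is the fiber-permutation step, which is exactly what the Main Theorem supplies; everything after it is a routine unpacking of the product metric.
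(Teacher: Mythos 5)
Your proposal is correct and follows essentially the same route as the paper: invoke the Main Theorem to see that an isometry $f$ permutes the fibers $M\times\{z\}$, and then use the product metric to kill the $z$-dependence of the $M$-component $f_1$. The paper phrases that second step as preservation under $f$ of the orthogonality of curves in $\{y\}\times N$ to the fibers, which is exactly your vanishing cross-term computation in different clothing.
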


It is worth noting that for a complete $N$ the theorem follows by the de Rham decomposition (see \cite{EH}).
Throughout the article we make no completeness assumption on $N$, except in theorem \ref{theorem:bin}.

An interesting application of our splitting theorem is related to groups that can act
properly discontinuously, cocompactly and isometrically on products $M\times N$.
When $M$ is a singleton and $N$ is a simply connected, connected, nilpotent Lie group,
equipped with a left-invariant metric, such groups are called {\em almost-crystallographic groups}.
All of the three famous Bieberbach theorems (see \cite{BB1}, \cite{BB2} and \cite{BB3})
have been generalized to almost-crystallographic groups.
The following generalization of the first Bieberbach theorem was given by L. Auslander.

\begin{bieberbach}
[(Generalized first Bieberbach theorem, Auslander, \cite{Auslander})] Every almost-crystallographic
group contains a finite index subgroup isomorphic to a uniform lattice, i.e.\! a discrete and cocompact subgroup of $N$.
\label{bb:Auslander}
\end{bieberbach}

We obtain the following generalization.

\begin{theorem}
Let $M$ be a closed connected Riemannian manifold and let $N$ be a simply
connected, connected, nilpotent Lie group equipped with a left-invariant metric.
If $\Gamma$ is a group acting properly discontinuously, cocompactly and isometrically on $M\times N$,
then $\Gamma$ contains a finite index subgroup isomorphic to a uniform lattice of $N$. \label{theorem:bin}
\end{theorem}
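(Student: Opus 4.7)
The strategy is to reduce to Bieberbach's theorem \ref{bb:Auslander} via the Splitting Theorem. First I would verify the hypotheses of Corollary \ref{cor:split}. Since $N$ is a simply connected, connected nilpotent Lie group, the exponential map is a diffeomorphism $N \cong \R^{\dim N}$, so $N$ is contractible. The projection $\pi: M \times N \to M$ is then a homotopy equivalence, which makes $\pi^*: H^n(M;\Z_2) \to H^n(M\times N;\Z_2)$ an isomorphism, so the \aste-condition holds. By the Splitting Theorem,
\[
\Iso(M\times N) = \Iso(M) \times \Iso(N),
\]
so every $\gamma \in \Gamma$ factors as $\gamma = (\gamma_M, \gamma_N)$ and we have natural projections $p_M$ and $p_N$ from $\Gamma$ to the respective isometry groups.

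Set $K_1 := \ker(p_N) = \Gamma \cap (\Iso(M) \times \{e\})$. Every element of $K_1$ preserves the compact fiber $M \times \{z_0\}$ setwise, so proper discontinuity of $\Gamma$ on $M \times N$ forces $K_1$ to be finite. I would then verify that $\Gamma/K_1 \cong p_N(\Gamma)$ acts properly discontinuously, cocompactly and by isometries on $N$: proper discontinuity follows by lifting any element with $\bar\gamma C \cap C \neq \emptyset$ (for $C \subset N$ compact) to an element of $\Gamma$ moving the compact set $M \times C$ into itself, and cocompactness follows by projecting a compact fundamental domain of $\Gamma$ in $M \times N$ onto $N$. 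Thus $\Gamma/K_1$ is almost-crystallographic, and by Bieberbach \ref{bb:Auslander} it contains a finite-index subgroup $\Lambda^*$ isomorphic to a uniform lattice of $N$. Pulling back to $\Gamma$, its preimage $H \leq \Gamma$ is of finite index, contains $K_1$ as a finite normal subgroup, and satisfies $H/K_1 \cong \Lambda^*$.

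The main obstacle is the final descent: turning the subquotient $H/K_1$ into an actual subgroup of $\Gamma$ isomorphic to a uniform lattice. Pass to $H' := C_H(K_1)$, which has finite index in $H$ since the conjugation map $H \to \mathrm{Aut}(K_1)$ has image in the finite group $\mathrm{Aut}(K_1)$. Then $K_1 \cap H'$ is central in $H'$, and $H'/(K_1 \cap H')$ embeds as a finite-index subgroup of $\Lambda^*$, hence is itself a uniform lattice in $N$ and in particular torsion-free finitely generated nilpotent. Since a central extension of a finitely generated nilpotent group by a finite central subgroup is again finitely generated nilpotent, $H'$ is finitely generated nilpotent, and therefore residually finite by Hirsch's theorem. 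Residual finiteness furnishes a normal finite-index subgroup $\Lambda_0 \lhd H'$ with $\Lambda_0 \cap K_1 = \{e\}$; any torsion element of $\Lambda_0$ would lie in $K_1 \cap H'$ and hence be trivial, so $\Lambda_0$ is torsion-free. The composition $\Lambda_0 \hookrightarrow H' \twoheadrightarrow H'/(K_1 \cap H')$ realizes $\Lambda_0$ as a finite-index subgroup of a uniform lattice of $N$, whence $\Lambda_0$ is itself a uniform lattice, and since $[\Gamma:\Lambda_0]$ is finite, the theorem follows. The reason one cannot simply take $H$ itself is that $K_1$ need not be central in $H$, so one must pass to the centralizer to secure nilpotence before invoking residual finiteness.
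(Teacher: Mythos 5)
Your proposal is correct and follows the same route as the paper: contractibility of $N$ gives the \aste-condition, the Splitting Theorem (Corollary \ref{cor:split}) yields $\Iso(M\times N)=\Iso(M)\times\Iso(N)$, the kernel $K_1$ of the projection of $\Gamma$ to $\Iso(N)$ is finite by proper discontinuity, and Auslander's theorem applies to the image. The two arguments diverge only in the final descent past the finite kernel. The paper notes that $\overline{\Gamma}$ is virtually finitely generated nilpotent, hence that $\Gamma$ is poly-(cyclic or finite) and therefore poly-$\Z$-by-finite; it then takes the poly-$\Z$ subgroup as its torsion-free finite-index subgroup, which injects under $\psi$ into $\overline{\Gamma}$ precisely because it meets the finite kernel trivially, and concludes by applying the structure of almost-crystallographic groups once more. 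You instead pull back the lattice $\Lambda^*$ to $H\leq\Gamma$, pass to the centralizer of $K_1$ to make the finite kernel central, deduce that $H'$ is finitely generated nilpotent, and use residual finiteness (Hirsch) to find a finite-index subgroup meeting $K_1$ trivially, which then embeds with finite index in $\Lambda^*$. Both descents are valid and of comparable depth; yours localizes the argument to the preimage of the lattice and trades the polycyclic bookkeeping for a centralizer-plus-residual-finiteness step, while the paper's is a bit shorter once one grants that poly-(cyclic or finite) groups are poly-$\Z$-by-finite.
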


Since $N$ is a Lie group with a left-invariant metric, it must be complete. Then, the de Rham decomposition gives an alternate
proof of this result (see Remark \ref{rem:new}, Section \ref{sc:corollaries} ). We note that
one can easily find examples showing that the other Bieberbach-theorems do not generalize to the $M\times N$ case.

Another interesting setting for applying Theorem \ref{th:mainin} is
Talelli's conjecture (Conjecture III of \cite{tal}).
Let us denote the cohomological dimension of a group $\Gamma$ by $cd(\Gamma)$.
We study the following, slightly different version of the conjecture (see \cite{tal}).

{\conjecture [(Talelli conjecture reformulated, $2005$)] If $\Gamma$ is a torsion-free group that acts smoothly and
properly discontinuously on $S^n \times \R^k$, then $cd(\Gamma)\leq k$.}

By a result of Mislin and Talelli (\cite{tal:2}),
we know that the conjecture holds for the large class of $LH\mathcal{F}$-groups (see \cite{Kropholler}).

In the context of this article it feels natural to replace $S^n$ by any closed, connected Riemannian manifold $M$, and to replace
$\R^k$ by any contractible Riemannian manifold $N$. By doing this, we obtain the following

{\conjecture [(Petrosyan, 2007)] If $\Gamma$ is a
torsion-free group acting smoothly and properly discontinuously on $M \times N$, then $cd(\Gamma)\leq \dim(N)$.}

Petrosyan has proven this conjecture in the case of $H\mathcal{F}$-groups and when $N$ is $1$-dimensional (see \cite{Petrosyan}).
We prove the following

\begin{theorem}
Let $M$ be a closed and connected Riemannian manifold and
let $N$ be a contractible Riemannian manifold. If $\Gamma$ is a torsion-free group acting
properly discontinuously and fiberwise volume decreasingly on $M\times N$,
then $\Gamma$ acts freely and properly discontinuously on $N$. In particular, we have that $cd(\Gamma)\leq \dim(N)$.
\end{theorem}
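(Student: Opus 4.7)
The first step is to verify that the pair $(M,N)$ satisfies the \aste-condition, so that the Main Theorem becomes available. Since $N$ is contractible, $H^j(N;\Z_2)=0$ for $j>0$ and $H^0(N;\Z_2)=\Z_2$. The K\"unneth formula then yields $H^n(M\times N;\Z_2)\cong H^n(M;\Z_2)$, and one checks that this isomorphism is realized precisely by $\pi^{\ast}$. Since every $\gamma\in\Gamma$ is a diffeomorphism of $M\times N$ which is fvd at every point of $N$, Theorem \ref{th:mainin} applies and produces, for each $\gamma\in\Gamma$ and each $z\in N$, a unique $w\in N$ with $\gamma(M\times\{z\})=M\times\{w\}$.

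Declaring $\bar\gamma(z):=w$ defines a smooth map $\bar\gamma:N\to N$. Running the same construction for $\gamma^{-1}$ shows $\bar\gamma$ is a diffeomorphism, and the assignment $\gamma\mapsto\bar\gamma$ is a homomorphism $\Gamma\to\Diffeo(N)$. Thus $\Gamma$ acts smoothly on $N$ by $\gamma\cdot z:=\bar\gamma(z)$. It then remains to show that this action is free and properly discontinuous, and to extract the bound on $cd(\Gamma)$.

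For freeness, suppose $\bar\gamma(z)=z$ for some $\gamma\in\Gamma$. Then $\gamma$ preserves the compact set $M\times\{z\}$. The full stabilizer $\Gamma_z=\{\gamma\in\Gamma:\bar\gamma(z)=z\}$ also preserves this compact set, so proper discontinuity of $\Gamma$ on $M\times N$ forces $\Gamma_z$ to be finite; since $\Gamma$ is torsion-free, $\Gamma_z=\{1\}$. Proper discontinuity on $N$ transfers in the obvious direction: if $K\subset N$ is compact then $M\times K$ is compact in $M\times N$, and $\bar\gamma(K)\cap K\neq\emptyset$ implies $\gamma(M\times K)\cap(M\times K)\neq\emptyset$, so only finitely many $\gamma\in\Gamma$ can arise.

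Finally, with $\Gamma$ acting freely, smoothly, and properly discontinuously on the contractible manifold $N$, the quotient $N/\Gamma$ is a smooth manifold of dimension $\dim(N)$ which is aspherical with fundamental group $\Gamma$, hence a $K(\Gamma,1)$. Therefore $cd(\Gamma)\le\dim(N)$. The work is really done once the Main Theorem is invoked; the only genuinely new step is the stabilizer-finiteness argument combined with torsion-freeness, and I do not anticipate any serious obstacle there.
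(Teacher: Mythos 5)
Your proposal is correct and follows essentially the same route as the paper: verify the \aste-condition from contractibility of $N$, use the Main Theorem (via the homomorphism $\psi$ of Corollary \ref{cor:ses}) to obtain an induced smooth action of $\Gamma$ on $N$, kill stabilizers using proper discontinuity on $M\times N$ together with torsion-freeness, and conclude $cd(\Gamma)\le\dim(N)$ from the resulting free, properly discontinuous action on the contractible manifold $N$. Your explicit point-stabilizer and proper-discontinuity-transfer arguments are slightly more detailed than the paper's (which only treats the kernel of $\psi_{\mid\Gamma}$ explicitly), but the substance is identical.
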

\section{Background and preliminary results \label{se:preliminaries}}

Let $M$ be a Riemannian manifold of dimension $n$ and let $x: U \rightarrow M, U\subset \R^n$ be a parametrization of $M$.
For $i,j\in \{1,2,\ldots ,n\}$, vector fields $X_i$ and functions $g_{ij}$ on $x(U)$ are defined as follows:
let $p\in M$ and $q=(q_1,q_2,\ldots ,q_n) \in U$ such that $x(q)=p$. For each
$i\in \{1,2,\ldots, n\}$, consider the curve $x_i(t):=x(q_1,q_2,\ldots, q_i+t,q_{i+1},\ldots ,q_n)$ in $M$.
We define $X_i(p)= \frac{d}{dt} x_i(t) _{\mid t=0}$ and $g_{ij}(p)= \langle X_i(p),X_j(p) \rangle _p$.
The $g_{ij}$ are called the components of the metric tensor relative to the parametrization $x$.
To simplify notation, we will sometimes denote $g_{ij}(x(q))$ by $g_{ij}(q)$.

In section \ref{sc:main}, we will need the notions of measure $0$ and of volume of subsets of $M$. We define these here.

\begin{definition}
A subset $A$ of a manifold $M$ has {\it measure $0$} if $x^{-1}(A)$ has Lebesgue measure $0$ in $\R^n$
for every parametrization $x$ 
of $M$.
\end{definition}

Observe that the notion of measure $0$ is invariant under diffeomorphisms.
\begin{definition} Assume that $x:U\rightarrow M$ is a parametrization.
If $C$ is an open connected set such that $\overline{C}\subset x(U)$ is compact
and such that the boundary $\partial (C)$ of $C$ has measure $0$, then we call $C$ a {\em nice open of $M$}.
\label{definition:niceopen}
\end{definition}
The volume of a nice open $C$ of $M$ is defined by
\[ \vol(C)= \int\limits_{x^{-1}(C)} \! \sqrt{\det (g_{ij})} \, d\mu, \]
where the $g_{ij}$ are the components of the metric tensor relative to $x$ and where $\mu$ is the Lebesgue measure on $\R^n$.
The definition is independent of the parametrization used.
\begin{definition}
A diffeomorphism $f:M \rightarrow M$ is {\em volume preserving} if it preserves the volume of all
nice opens of $M$.
\end{definition}
\begin{definition}
Take a countable number of nice opens, say $(C_i)_{i\in I}$ where $I$ is some index set,
such that the $C_i$ are pairwise disjoint and such that
$M\cmd \bigcup_{i\in I} C_i$
has measure $0$. We call such a family a {\em nice family} for $M$.
We define the volume of $M$ as
\[ \vol(M)= \sum_{i\in I} \vol(C_i) .\] \label{definition:nicefamily}
\end{definition}
Let us elaborate on this definition. First of all,
note that this definition of volume is independent of the nice family chosen.

Secondly, there is a standard way of finding a nice family $(C_i)_{i\in I}$ for $M$.
Start with a countable number of parameterizations $x_1,x_2,\ldots $ whose images contain
the closures of nice open sets $B_1,B_2,\ldots$ respectively.
Make sure that $\cup_{i=1}^{\infty} B_i \supset M$.
Consider the sets
\[ B_1':=B_1,\, B_2':=B_2\cmd \overline{B_1},\, B_3':=B_3\cmd \overline{B_1\cup B_2},\, \ldots , \, B_n':=B_n\cmd \overline{\cup_{i=1}^{n-1} B_i}, \ldots \]
You can take the family $(C_i)_{i\in I}$ as
the family of connected components of the sets $B_i'$.


Finally, note that volume preserving diffeomorphisms preserve $\vol(M)$.\\

An important class of volume preserving diffeomorphisms is the class of isometries of a Riemannian manifold.
We will be primarily interested in isometries
of a product of manifolds $M$ and $N$. This product is again
a Riemannian manifold with inner product given by
\[ \langle v_1 \oplus w_1, v_2 \oplus w_2\rangle_{(y,z)} = \langle v_1,v_2\rangle _y + \langle w_1,w_2 \rangle_z,\]
for all $(y,z)\in M\times N, \, v_1,v_2 \in T_y(M)$ and $w_1, w_2\in T_z(N)$.
\begin{definition}
An isometry on a product of manifolds is said to {\em split}
if its $M$-component $f_1:M\times N \rightarrow M$ is
independent of the $N$-coordinates and its $N$-component $f_2:M\times N \rightarrow N$
is independent of its $M$-coordinates. In this case, the component mappings $f_1$ and $f_2$ can be seen as
isometries of $M$ and $N$ respectively.
\end{definition}

Note that all isometries of $M\times N$ split if and only if $\Iso(M\times N)=\Iso(M)\times \Iso(N)$.\\

\noindent The following theorem is a standard result from algebraic topology.
\begin{theorem} [(Poincar\'e-Lefschetz Duality, \cite{bredon})] Let $M$ be a compact orientable
$n$-manifold and let $L$ be a closed subset of $M$. Denoting \v{C}ech cohomology by $\check{H}$,
we have the following
commutative diagram where the rows are exact and all the vertical arrows
(cap products with the orientation class) are isomorphisms:
\[
\begin{array}{ccccccccccc}
\cdots & \rightarrow & \check{H}^p(M,L) & \rightarrow & \check{H}^p(M) & \rightarrow & \check{H}^p(L) &\rightarrow &
\check{H}^{p+1}(M,L) &\rightarrow & \cdots \\
& & \downarrow \approx  & & \downarrow \approx  & & \downarrow \approx  & & \downarrow \approx  & & \\
\cdots & \rightarrow & H_{n-p}(M\cmd L) & \rightarrow & H_{n-p}(M) & \rightarrow & H_{n-p}(M,M\cmd L) &\rightarrow &H_{n-p-1}(M\cmd L) &\rightarrow & \cdots
\end{array}
\]
For non-orientable $M$ the theorem holds with $\Z_2$-coefficients.\label{th:duality}
\end{theorem}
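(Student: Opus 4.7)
The plan is to construct each vertical arrow as a cap product with the $\Z_2$-fundamental class $[M]\in H_n(M;\Z_2)$, which exists for every compact manifold regardless of orientability (and replacing $\Z_2$ by $\Z$ in the oriented case), and then to verify both that these maps form a chain map between the two long exact sequences and that each one is an isomorphism. The top row is the Čech cohomology long exact sequence of the pair $(M,L)$ and the bottom row is the singular homology long exact sequence of the pair $(M,M\cmd L)$, so exactness of the rows is automatic; all the real content lives in the vertical maps.

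First I would define the duality map $D_L\colon \check{H}^p(M,L)\to H_{n-p}(M\cmd L)$ using the tautness of Čech cohomology on the compact pair $(M,L)$. Writing
\[\check{H}^p(M,L)=\varinjlim_{U\supset L}H^p(M,U),\]
with $U$ ranging over open neighborhoods of $L$, the standard cap product gives, for each such $U$ and each open $V$ with $\overline{V}\subset U$, a map $\cap[M]\colon H^p(M,U)\to H_{n-p}(M\cmd \overline{V})$; postcomposing with the natural map $H_{n-p}(M\cmd \overline{V})\to H_{n-p}(M\cmd L)$ and taking the direct limit over the neighborhoods yields $D_L$. The commutativity of the diagram then reduces to the naturality of $\cap$ with respect to inclusions of pairs together with its compatibility with connecting homomorphisms (up to the usual sign).

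The core of the argument is showing that each $D_L$ is an isomorphism, which I would do by a bootstrap. The case $L=\emptyset$ is the classical Poincar\'e duality theorem, itself proved by a Mayer--Vietoris induction: verify it directly when $M$ is a Euclidean ball, propagate through finite open covers via Mayer--Vietoris sequences and the five lemma, and reach compact $M$ by covering with finitely many coordinate charts. For ``polyhedral'' closed $L$ (finite unions of coordinate balls) one runs a parallel Mayer--Vietoris induction in the $L$-variable, again using the five lemma together with the case $L=\emptyset$. For arbitrary closed $L$, approximate from above by polyhedral neighborhoods $U\supset L$, note that both $\check{H}^p(M,L)$ and $H_{n-p}(M\cmd L)$ are the corresponding directed colimits over such $U$, and conclude by the fact that a direct limit of isomorphisms is an isomorphism.

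The hardest step is this final passage from polyhedral to arbitrary closed $L$, and it is precisely why \v Cech cohomology appears in the statement: singular cohomology of a closed subset is not continuous in $L$, so the left-hand column must be computed tautly while the right-hand column is automatically continuous via the inclusions $M\cmd U\hookrightarrow M\cmd L$. Verifying that $D_L$ is the direct limit of the (already established) duality isomorphisms $D_U$ requires paracompactness of $M$ to guarantee tautness of $\check{H}^\ast$, and requires that the cap product commutes with filtered colimits. Once these compatibilities are in place, the theorem follows; this bookkeeping with colimits is the technical heart of the proof and the main obstacle one must navigate.
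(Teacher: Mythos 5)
The paper does not actually prove this statement: it is quoted as a standard result and attributed directly to Bredon \cite{bredon}, so there is no in-paper argument to compare against. Your sketch is, in outline, the standard proof of Poincar\'e--Alexander--Lefschetz duality found in that reference: define the vertical maps by capping with the ($\Z_2$-)fundamental class through the taut/direct-limit description of $\check{H}^{*}$ on the compact pair $(M,L)$, use naturality of the cap product to get commutativity of the ladder, and establish that the maps are isomorphisms by a Mayer--Vietoris bootstrap followed by passage to the colimit over neighborhoods of $L$; the identification $H_{n-p}(M\cmd L)\cong\varinjlim H_{n-p}(M\cmd U)$ on the homology side and tautness on the \v{C}ech side are exactly the right ingredients for the last step. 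One point needs repair if you write this out: the base case of your bootstrap cannot be ``Poincar\'e duality for a Euclidean ball'' in the form $H^{p}(B)\to H_{n-p}(B)$, which is false as stated (compare $H^{0}$ with $H_{n}$ of a ball). The induction must be run either with compactly supported cohomology, $H^{p}_{c}(U)\cong H_{n-p}(U)$ for open $U$, or, as Bredon does, over compact subsets $K$ in the dual form $\check{H}^{p}(K)\cong H_{n-p}(M,M\cmd K)$, whose base case is the local homology computation $H_{n-p}(M,M\cmd\{x\})$. With that adjustment your argument is the standard one and is correct.
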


\noindent One obtains the following interesting
\begin{corollary}
If L is a proper closed subset of a compact $n$-manifold $M$, then
\[ \mid \check{H}^n(L;\Z_2) \mid < \mid \check{H}^n(M;\Z_2) \mid. \] \label{cor:sur}
\end{corollary}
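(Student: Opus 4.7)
The plan is to translate the inequality into a statement about the map $\check{H}^n(M;\Z_2) \to \check{H}^n(L;\Z_2)$ induced by the inclusion $L \hookrightarrow M$, then use Poincaré--Lefschetz duality (Theorem \ref{th:duality}) to reduce the question to an essentially obvious statement about $H_0$.

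First I would apply Theorem \ref{th:duality} with $\Z_2$-coefficients (so no orientability assumption is needed) to the tail of the exact sequence
\[ \check{H}^n(M,L;\Z_2) \;\longrightarrow\; \check{H}^n(M;\Z_2) \;\longrightarrow\; \check{H}^n(L;\Z_2) \;\longrightarrow\; \check{H}^{n+1}(M,L;\Z_2). \]
Poincaré--Lefschetz duality identifies the last term with $H_{-1}(M\setminus L;\Z_2)=0$, so the restriction map $\check{H}^n(M;\Z_2)\to\check{H}^n(L;\Z_2)$ is surjective. Under duality the preceding map becomes the map
\[ H_0(M\setminus L;\Z_2) \;\longrightarrow\; H_0(M;\Z_2) \]
induced by the open inclusion $M\setminus L \hookrightarrow M$.

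Next I would note that because $M$ is a compact manifold it has only finitely many components, so $H_0(M;\Z_2)=\check{H}^n(M;\Z_2)$ is finite; surjectivity from the previous step then gives finiteness of $\check{H}^n(L;\Z_2)$ as well. Hence the short exact sequence obtained from the tail above satisfies $|\check{H}^n(M;\Z_2)| = |\operatorname{im}| \cdot |\check{H}^n(L;\Z_2)|$, where $\operatorname{im}$ denotes the image of $\check{H}^n(M,L;\Z_2) \to \check{H}^n(M;\Z_2)$.

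Therefore it suffices to prove $|\operatorname{im}|\geq 2$, or equivalently that the map $H_0(M\setminus L;\Z_2)\to H_0(M;\Z_2)$ is nonzero. This is the only place the hypothesis that $L$ is proper enters: since $L\subsetneq M$, there is some $p\in M\setminus L$, and the path component of $p$ in $M\setminus L$ is carried to the (nonzero) class of the component of $p$ in $M$. The main ``obstacle'' is really only bookkeeping: one has to invoke Poincaré--Lefschetz duality with $\Z_2$-coefficients (so as not to require orientability) and to check the finiteness needed to pass from the surjection of $\Z_2$-vector spaces to a strict inequality of cardinalities; the geometric content is simply that $M\setminus L$ is nonempty.
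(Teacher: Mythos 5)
Your argument is correct and is essentially the paper's own: both proofs combine the Poincar\'e--Lefschetz duality ladder of Theorem \ref{th:duality} with the long exact sequence of the pair and the observation that $M\setminus L\neq\emptyset$, the paper reading off the isomorphism $\check{H}^n(L;\Z_2)\cong H_0(M,M\setminus L;\Z_2)$ as a proper quotient of $H_0(M;\Z_2)\cong\check{H}^n(M;\Z_2)$, while you equivalently track the kernel of the restriction $\check{H}^n(M;\Z_2)\to\check{H}^n(L;\Z_2)$. The extra bookkeeping you supply (surjectivity via $H_{-1}=0$, finiteness from compactness of $M$) is sound and just makes explicit what the paper leaves implicit.
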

\begin{proof}
The corollary follows from the fact that $\check{H}^n(L;\Z_2)$ is isomorphic to 
$H_0(M,M\cmd L;\Z_2)$ and this group contains less elements than $H_0(M;\Z_2)\cong \check{H}^n(M;\Z_2)$,
by Theorem \ref{th:duality}.
\end{proof}

\noindent We end this section by a purely algebraic lemma. Recall the following definitions.
\begin{definition}
A symmetric matrix $G$ in $\mathcal{M}_n(\R)$ is
positive definite if $x^TGx>0$ for every non-zero vector $x\in \R^k$.
A symmetric matrix $H\in \mathcal{M}_n(\R)$ is positive semi-definite if $x^THx\geq 0$ for every vector $x\in \R^k$.
\end{definition}
\begin{lm} If $G\in \mathcal{M}_n(\R)$ is positive definite and $H\in \mathcal{M}_n(\R)$ is positive semi-definite,
then $\det(G+H)\geq \det(G)$. The inequality is strict when $H\neq 0$.
\label{lm:posdef}
\end{lm}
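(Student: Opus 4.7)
The plan is to reduce to the case where the positive definite matrix is the identity by using a congruence transformation. Since $G$ is symmetric and positive definite, I can factor it as $G = A^T A$ for some invertible matrix $A$ (for instance via Cholesky, or by taking $A$ to be the symmetric positive definite square root of $G$). Then
\[ \det(G + H) = \det\bigl(A^T(I + A^{-T} H A^{-1})A\bigr) = \det(G)\cdot \det(I + S), \]
where $S := A^{-T} H A^{-1}$. Because $H$ is symmetric positive semi-definite, $S$ is also symmetric and satisfies $x^T S x = (A^{-1}x)^T H (A^{-1}x) \geq 0$ for every $x$, so $S$ is positive semi-definite.

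The claim then reduces to showing $\det(I + S) \geq 1$ for any symmetric positive semi-definite $S$, with strict inequality when $S \neq 0$. For this, I would diagonalize $S$ by an orthogonal matrix $Q$, obtaining eigenvalues $\lambda_1, \ldots, \lambda_n \geq 0$. Then
\[ \det(I + S) = \det\bigl(Q^T(I + \operatorname{diag}(\lambda_i))Q\bigr) = \prod_{i=1}^n (1 + \lambda_i) \geq 1. \]

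Finally, for the strict inequality: if $H \neq 0$ then $S = A^{-T} H A^{-1} \neq 0$ since $A$ is invertible, so at least one eigenvalue $\lambda_i$ is strictly positive (a symmetric matrix is zero iff all its eigenvalues vanish), giving $\prod (1 + \lambda_i) > 1$ and hence $\det(G + H) > \det(G)$. There is no real obstacle here; the only small point worth being careful about is producing the factorization $G = A^T A$, which is guaranteed by the spectral theorem applied to the positive definite matrix $G$.
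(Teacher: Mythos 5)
Your proof is correct, but it takes a genuinely different route from the paper's. You reduce to the identity by a congruence transformation: writing $G = A^TA$ and pulling $A$ out of the determinant gives $\det(G+H) = \det(G)\det(I+S)$ with $S = A^{-T}HA^{-1}$ positive semi-definite, and then orthogonal diagonalization yields the clean product formula $\det(I+S) = \prod_i(1+\lambda_i) \geq 1$, with strict inequality exactly when $S \neq 0$, i.e.\ when $H \neq 0$. The paper instead keeps $G$ general: it first proves the special case where $H$ is a diagonal matrix with a single nonzero entry $\mu$, by cofactor expansion $\det(G+E) = \det(G) + \mu\det(\widetilde{G})$ and positivity of the principal minor $\det(\widetilde{G})$; it then orthogonally diagonalizes $H$ and adds the diagonal entries one at a time, checking that positive definiteness is preserved at each step. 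Your argument is shorter and gives a sharper conclusion for free, namely the exact multiplicative factor $\prod_i(1+\lambda_i)$ where the $\lambda_i$ are the eigenvalues of $G^{-1}H$; the paper's argument is more elementary in that it avoids the square root or Cholesky factorization of $G$ and uses only cofactor expansion plus the positivity of principal minors. Both proofs rely on the spectral theorem to diagonalize the semi-definite matrix, so neither is strictly more self-contained than the other.
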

\begin{proof}
We start by proving the special case where $H=E=(\mu,0,0,\ldots ,0)$ with $\mu\geq 0$. Here, the notation
$(e_{11},e_{22},\ldots ,e_{nn})$ stands for a diagonal matrix whose $(i,i)^{\mbox{th}}$ entry is $e_{ii}$.


\noindent Denote by $\widetilde{G}$ the matrix obtained from $G$ by removing the first row and column, i.e.
$\widetilde{G}_{ij}=G_{(i+1)(j+1)}$ for $i,j \in \{1,2,\ldots ,n-1\}$. Expanding $\det(G+E)$ by
the first row gives
\[ \det(G+E)=\det(G)+\mu \det(\widetilde{G}).\]
Since
\[ (x_1,x_2,\ldots ,x_{n-1})\widetilde{G} (x_1,x_2,\ldots x_{n-1})^T = (0,x_1,x_2,\ldots ,x_{n-1}) G (0,x_1,x_2,\ldots ,x_{n-1}), \]
for all $(x_1,x_2,\ldots ,x_{n-1})\in \R^{n-1}$, we have that $\widetilde{G}$ is positive definite.
This implies that $\det(\widetilde{G})>0$ 
and thus $\det(G+E)\geq \det(G)$.
Strict inequality
holds if and only if $\mu>0$. Notice that a similar proof exists
when $H$ equals a diagonal matrix of the form $(0,0,\ldots ,0,\mu,0,\ldots ,0)$.

In general,
take an orthogonal matrix $O$ such that $D=OHO^T=(\lambda_1,\lambda_2,\ldots ,\lambda_n)$. Clearly, $\lambda_i\geq 0$ for all $i$.
We have
\[ \det(G+H)=\det(OGO^T+D)=\det(OGO^T+E_1+E_2+\ldots +E_n),\]
where $E_i$ is the matrix that has $\lambda_i$ as its $(i,i)^{th}$ entry and zeros everywhere else.
By positive definiteness of $OGO^T$ we have
that $OGO^T+E_1+E_2+\ldots +E_k$ is positive definite for each $k\in \{1,2,\ldots ,n\}$.
The proof now follows from the special case proven above.
\end{proof}
\section{Main theorem \label{sc:main}}
\renewcommand{\thetheorem}{\thesubsection.\arabic{theorem}}
\subsection{Proof of the main theorem and splitting of isometries.}
From now on, assume that $M$ is an $n$-dimensional closed Riemannian manifold. Apart from being Riemannian, we put no conditions on
$N$. Consider the product manifold $M\times N$
and a point $z\in N$. Define the inclusion
\[
\begin{array}{cccl}
i: & M & \rightarrow & M\times N\\
 & y & \mapsto & (y,z)
 \end{array} \]
and the projection
\[ \begin{array}{cccl}
\pi: & M\times N &\rightarrow & M\\
 & (y,w) &\mapsto & y.
\end{array} \]
Clearly, the composition $\pi \circ i$ is the identity mapping of $M$ and so the mapping
\[ i^*\circ \pi^*: H^n(M;\Z_2) \mapsto H^n(M;\Z_2) \]
is an isomorphism.
Therefore, $\pi^*$ must be injective.

\begin{definition} If $M$ and $N$ are such that
\[ \pi^*:H^n(M;\Z_2)\rightarrow H^n(M\times N;\Z_2) \]
is an isomorphism or equivalently that
\[ i^* :H^n(M\times N;\Z_2)\rightarrow H^n(M;\Z_2) \]
is an isomorphism,
then we say that $(M,N)$ satisfies the {\em \aste-condition}. Note that this definition does not depend on the choice of $z\in N$.
\end{definition}

The following propositions will be useful in the proof of our main theorem.
\begin{proposition} If $(M,N)$ satisfies the \aste-condition, then
we have that
\[ \phi:= \pi\circ f\circ i :M\rightarrow M \]
is surjective for any homeomorphism $f:M\times N \rightarrow M\times N$.
 \label{proposition:surj}
\end{proposition}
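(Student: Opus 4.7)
The plan is to argue by contradiction. Assume that $\phi = \pi \circ f \circ i$ is not surjective. Since $M$ is compact and $\phi$ is continuous, the image $L := \phi(M)$ is a proper closed subset of $M$. The strategy is to show that this forces $\phi^* : H^n(M;\Z_2) \to H^n(M;\Z_2)$ to vanish, while functoriality combined with the \aste-condition forces $\phi^*$ to be an isomorphism.

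First, by Corollary \ref{cor:sur} applied to the compact $n$-manifold $M$ and the proper closed subset $L$, we have $|\check{H}^n(L;\Z_2)| < |\check{H}^n(M;\Z_2)| = 2$, so $\check{H}^n(L;\Z_2) = 0$. Since $M$ is a manifold (hence an ANR), singular and \v{C}ech cohomology agree on $M$, so we may freely switch between them. The map $\phi$ factors as $M \xrightarrow{\widetilde{\phi}} L \xhookrightarrow{j} M$, giving $\phi^* = \widetilde{\phi}^* \circ j^*$ where $j^* : H^n(M;\Z_2) \to \check{H}^n(L;\Z_2)$. Since the target is zero, $\phi^*$ is the zero map.

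Second, apply functoriality to the original description $\phi = \pi \circ f \circ i$ to obtain
\[ \phi^* = i^* \circ f^* \circ \pi^* : H^n(M;\Z_2) \to H^n(M;\Z_2). \]
The \aste-condition asserts that $\pi^*$ (and equivalently $i^*$) is an isomorphism, while $f^*$ is an isomorphism because $f$ is a homeomorphism. Hence $\phi^*$ is a composition of three isomorphisms on $H^n(M;\Z_2) \cong \Z_2$, so $\phi^*$ is itself an isomorphism — in particular nonzero. This contradicts the vanishing established above, so $\phi$ must have been surjective.

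The only subtle point is the passage between singular and \v{C}ech cohomology when factoring $\phi^*$ through $L$; this is legitimate because $M$ is a manifold, so singular $H^n(M;\Z_2)$ and \v{C}ech $\check{H}^n(M;\Z_2)$ coincide, and it is in \v{C}ech cohomology that Corollary \ref{cor:sur} is stated and used. Apart from this bookkeeping the argument is a clean application of Poincar\'e–Lefschetz duality (already packaged in Corollary \ref{cor:sur}) together with the assumed \aste-condition.
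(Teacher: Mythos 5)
Your argument is correct and is essentially the paper's: both establish that $\phi^* = i^*\circ f^*\circ \pi^*$ is an isomorphism and then derive a contradiction by factoring $\phi^*$ through the top \v{C}ech cohomology of a proper closed subset, which Corollary \ref{cor:sur} forbids. The only differences are that the paper factors through $M\cmd\sigma$ for an open $n$-cell $\sigma$ inside the missed open set (so everything stays among CW-complexes), whereas you factor directly through $L=\phi(M)$ via \v{C}ech functoriality, and that your intermediate conclusion $\check{H}^n(L;\Z_2)=0$ uses connectedness of $M$ to get $\lvert\check{H}^n(M;\Z_2)\rvert=2$ --- the paper's pure cardinality argument (a surjection cannot factor through a strictly smaller finite group) avoids this and is what the later remark treating disconnected $M$ relies on.
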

\begin{proof}
Since $f$ is a homeomorphism and since $(M,N)$ satisfies the \aste-condition, we know that
\[ \phi^*: H^n(M;\Z_2) \stackrel{\pi^*}{\rightarrow} H^n(M\times N; \Z_2) \stackrel{f^*}{\rightarrow}
 H^n(M\times N; \Z_2) \stackrel{i^*}{\rightarrow} H^n(M;\Z_2) \]
is an isomorphism.
Assume now that $\phi$ is not surjective.
The image of $\phi$ is compact and thus closed. Since it misses a point, say $p$, it has to miss an open subset of $M$, say $U$.
Take a CW-complex structure on $M$ containing an open $n$-cell $\sigma$ with $p\in \sigma \subset U$.
Now, the forgetful map $\phi_1:M \rightarrow M\cmd \sigma$ of $\phi$ induces the mapping
$\phi^*_1:H^n(M\cmd \sigma;\Z_2)\rightarrow H^n(M;\Z_2)$.
Let $j$ be the inclusion mapping of $M\cmd \sigma$ into $M$. On the cohomology level we obtain
\[ \phi^*_1\circ j^*:H^n(M;\Z_2) \rightarrow H^n(M;\Z_2), \]
and this mapping equals $\phi^*$.
 Since $\phi^*$ is surjective,
we conclude that $\phi^*_1$ must be surjective, which is is a contradiction to Corollary \ref{cor:sur}
because \v{C}ech cohomology and singular cohomology are isomorphic for CW-complexes.
\end{proof}

\begin{proposition} Let
$f: M\times N \rightarrow M\times N$ be a diffeomorphism and let $z\in N$.
On $f(M\times \{z\})$, we consider the induced metric from $M\times N$.
Suppose that $C$ is a nice open in $M$ such that
the natural projection map $\pi:f(M\times \{z\})\rightarrow M$ restricts to a diffeomorphism $\phi$ onto an open set containing
$\overline{C}$.
Then, $\vol(\phi^{-1}(C))\geq \vol(C)$. Moreover, the equality is strict if and only if the projection $p:M\times N \rightarrow N$
is not constant on $\phi^{-1}(C)$. \label{pr:highervolume}
\end{proposition}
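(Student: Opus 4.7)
The plan is to parametrize $\phi^{-1}(C)$ as a graph over $C$ and compare its induced volume to that of $C$ via Lemma \ref{lm:posdef}. Because $\phi:f(M\times\{z\})\supset\phi^{-1}(C)\to\pi(\phi^{-1}(C))$ is a diffeomorphism onto an open set containing $\overline{C}$, the map
\[ h := p\circ\phi^{-1}: \pi(\phi^{-1}(C))\to N \]
is smooth and $\phi^{-1}(C) = \{(y,h(y)) : y\in C\}$ is the graph of $h$ over $C$.

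By the definition of a nice open, there is a single parametrization $x:U\to M$ with $\overline{C}\subset x(U)$. Define the induced parametrization
\[ \widetilde{x}:x^{-1}(C)\to \phi^{-1}(C),\quad \widetilde{x}(q)=(x(q),h(x(q))). \]
Writing $X_i(q)=\frac{d}{dt}x(q+te_i)|_{t=0}$ for the coordinate vector fields of $x$, the coordinate vector fields of $\widetilde{x}$ are $\widetilde{X}_i(q)=X_i(q)\oplus dh_{x(q)}(X_i(q))$. Using the definition of the product metric, the components of the metric tensor of the induced metric on $\phi^{-1}(C)$ read
\[ \widetilde{g}_{ij}(q) = \langle X_i,X_j\rangle_{x(q)} + \langle dh(X_i),dh(X_j)\rangle_{h(x(q))} = g_{ij}(q) + h_{ij}(q), \]
where the matrix $H(q):=(h_{ij}(q))$ is the Gram matrix of $\{dh_{x(q)}(X_i(q))\}_i$ in the Riemannian metric of $N$, and is therefore symmetric and positive semi-definite. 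The matrix $G(q):=(g_{ij}(q))$ is positive definite since it is the Gram matrix of a basis of $T_{x(q)}M$.

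Apply Lemma \ref{lm:posdef} pointwise to obtain $\det(G(q)+H(q))\geq \det(G(q))$ for every $q\in x^{-1}(C)$, with strict inequality exactly when $H(q)\neq 0$. Taking square roots and integrating over $x^{-1}(C)$ with the Lebesgue measure yields
\[ \vol(\phi^{-1}(C))=\int_{x^{-1}(C)}\sqrt{\det(G+H)}\,d\mu \;\geq\; \int_{x^{-1}(C)}\sqrt{\det(G)}\,d\mu = \vol(C). \]

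For the equality statement, suppose $\vol(\phi^{-1}(C))=\vol(C)$. Since the integrand $\sqrt{\det(G+H)}-\sqrt{\det(G)}$ is nonnegative and continuous, it must vanish identically on $x^{-1}(C)$, which by Lemma \ref{lm:posdef} forces $H(q)=0$ everywhere. As the Riemannian metric on $N$ is positive definite and the $X_i(q)$ span $T_{x(q)}M$, this gives $dh\equiv 0$ on $C$. A nice open is connected by definition, so $h$ is constant on $C$, i.e.\ $p=h\circ\pi$ is constant on $\phi^{-1}(C)$. The converse is immediate: if $p$ is constant on $\phi^{-1}(C)$, then $h$ is constant, $H\equiv 0$, and the two integrals agree. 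The only delicate point in this plan is verifying that the induced metric on $f(M\times\{z\})$ pulls back through $\widetilde{x}$ to the asserted sum $G+H$, but this is a direct consequence of the explicit form of the product metric spelled out in Section \ref{se:preliminaries}.
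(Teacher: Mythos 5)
Your proposal is correct and follows essentially the same route as the paper: both parametrize $\phi^{-1}(C)$ by $\phi^{-1}\circ x$ (your graph map $\widetilde{x}$ is exactly this), compute that the induced metric tensor is $G+H$ with $H$ the positive semi-definite Gram matrix of the $N$-components of the coordinate vectors, and conclude via Lemma \ref{lm:posdef}. Your treatment of the equality case by contraposition (vanishing of a continuous nonnegative integrand forces $H\equiv 0$, hence $dh\equiv 0$ on the connected set $C$) is a minor variant of the paper's direct argument and is equally valid.
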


\begin{proof}
Let $x:U\rightarrow M$ be a parametrization for $M$ such that $\overline{C}\subset x(U)$.
Let $V=x^{-1}(C)$ and consider the parametrization
\[ \psi:=\phi^{-1}\circ x: V\rightarrow \phi^{-1}(C). \]
Write $\psi=(x,\eta)$ where $x:V\rightarrow M$ is the $M$-component map and where $\eta:V\rightarrow N$ is the $N$-component map
of $\psi$.
Denote the components of the metric tensor relative
to $x$ and $\psi$ by $g_{ij}$ and $\widetilde{g_{ij}}$ respectively.
By definition we have that
\[ \vol(C)= \int\limits_V \sqrt{\det(g_{ij})(q)} \,d\mu \]
and
\[ \vol(\phi^{-1}(C))= \int\limits_V \sqrt{\det(\widetilde{g_{ij}})(q)} \, d\mu. \]
To prove that $\vol(\phi^{-1}(C))\geq \vol(C)$ it thus suffices to
show that $\det(g_{ij}(q))\leq \det(\widetilde{g_{ij}}(q))$ for all $q\in V$.
Let us investigate the functions $\widetilde{g_{ij}}$.\\
For each $i\in \{1,2,\ldots ,n\}$ and $q=(q_1,q_2,\ldots ,q_n)\in V$, denote the curve
\[ x(q_1,q_2,\ldots , q_i+t, q_{i+1}, \ldots ,q_n) \]
by $x_i^q(t)$ and
\[ \psi (q_1,q_2,\ldots ,q_i+t, q_{i+1},\ldots q_n) \]
by $\psi_i^q(t)=(x_i^q(t),\eta_i^q(t))\in M\times N$.
For simplicity, we drop the upper index $q$ in the following calculation.
\begin{eqnarray*}
\widetilde{g_{ij}}(q) &=& \langle \psi_i'(0),\psi_j'(0) \rangle_{\psi(q)} \\
&=& \langle (x_i(t),\eta_i(t))'(0),(x_j(t),\eta_j(t))'(0)\rangle_{\psi(q)}\\
&=& \langle x_i'(0),x_j'(0)\rangle_{x(q)} + \langle \eta_i'(0),\eta_j'(0)\rangle _{\eta(q)}\\
&=& g_{ij}(q)+h_{ij}(q),
\end{eqnarray*}
where
\[ h_{ij}(q)= \langle (\eta_i^q)'(0), (\eta_j^q)'(0) \rangle_{\eta(q)}. \]

This shows that $\widetilde{g_{ij}}(q)=g_{ij}(q)+h_{ij}(q)$ for all $q\in V$. The first part of the
proposition now follows from Lemma \ref{lm:posdef}.

If $p\circ \phi^{-1}$ is
not constant on $C$, then $\vol(\phi^{-1}(C))>\vol(C)$ . Indeed, in this case there exists an open set $O\subset C$ such that the
linear map $D(p\circ \phi^{-1})_y \neq 0$ for
each $y\in O$. Let $W = x^{-1}(O)$.
We have that for each $q\in W$ there exists $i_q\in \{1,2,\ldots ,n\}$ such that
$(\eta_{i_q}^{q})' (0)=D(p\circ \phi^{-1})_{x(q)}((x_{i_q}^{q})'(0))\neq 0$.
The matrices $h_{ij}(q)$ are thus non-zero. Our claim
now follows from Lemma \ref{lm:posdef}.
\end{proof}

We give one more definition before proceeding with our main result.
\begin{definition} Let $f:M\times N \rightarrow M\times N$ be a diffeomorphism and let $z\in N$. Equip
both $M\times \{z \}$ and $f(M\times \{z \})$ with the Riemannian metric induced from $M\times N$ and note that
$\vol(M\times \{z\})=\vol(M)$.
We say that $f$ is {\em fiberwise volume decreasing at $z$} if
\[ \vol(f(M\times \{z \})) \leq \vol(M). \]
 A diffeomorphism is
{\em fiberwise volume decreasing} (fvd) if it is fiberwise volume decreasing at every point of $N$.
We denote the set of all fiberwise volume decreasing maps of $M\times N$ by $\fvdc(M\times N)$.
\end{definition}

\begin{theorem} Let $M$ be a closed connected Riemannian manifold and let $N$ be a Riemannian manifold
such that $(M,N)$ satisfies the \aste-condition.
If $f:M\times N \rightarrow M\times N$ is fvd at $z\in N$,
then there exists $w\in N$ such that $f(M\times \{z\})=M\times \{w\}$. \label{th:main}
\end{theorem}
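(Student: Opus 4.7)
The plan is the following. Write $f\circ i:M\to M\times N$ in components as $y\mapsto(\phi(y),\eta(y))$, where $\phi=\pi\circ f\circ i$ and $\eta=p\circ f\circ i$. Proposition \ref{proposition:surj} already gives $\phi$ surjective, so the conclusion $f(M\times\{z\})=M\times\{w\}$ reduces to showing that the smooth map $\eta:M\to N$ is constant. The strategy is to combine Sard's theorem with the volume-inequality Proposition \ref{pr:highervolume} and to exploit the equality case that is forced by the fvd hypothesis.

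I would apply Sard's theorem to $\phi:M\to M$. The set of regular values $R\subseteq M$ is open (its complement is the image of the critical set, which is compact hence closed) and has full Lebesgue measure. Over each connected component $R_\alpha$ of $R$, the restriction $\phi:\phi^{-1}(R_\alpha)\to R_\alpha$ is a proper local diffeomorphism, hence a $k_\alpha$-sheeted covering. Cover each $R_\alpha$ up to measure zero by a nice family of nice opens $C_i$ small enough to be evenly covered; each of the $k_\alpha$ sheets of $f(M\times\{z\})$ over such a $C_i$ satisfies the hypothesis of Proposition \ref{pr:highervolume}. Summing the resulting inequalities gives
\[ \vol\bigl(f(M\times\{z\})\bigr)\;\geq\;\sum_{\alpha}k_\alpha\vol(R_\alpha)\;\geq\;\sum_{\alpha}\vol(R_\alpha)\;=\;\vol(R)\;=\;\vol(M). \]
Since $f$ is fvd at $z$ the reverse inequality also holds, so every inequality is an equality. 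This forces $k_\alpha=1$ for every $\alpha$, so $\phi$ restricts to a diffeomorphism $U:=\phi^{-1}(R)\to R$, and by the strictness clause of Proposition \ref{pr:highervolume} the map $\eta$ is constant on $\phi^{-1}(C_i)$ for every nice open $C_i$ in the family. Since the union of the $\phi^{-1}(C_i)$ has full measure in the open set $U_\alpha:=\phi^{-1}(R_\alpha)$ and $U_\alpha$ is connected, continuity of $d\eta$ and connectedness give $\eta$ constant on each $U_\alpha$; in particular $d\eta\equiv 0$ on $U$.

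It remains to upgrade this to $\eta$ being constant on all of $M$. The equality of total volumes also yields $\vol\bigl(f(M\times\{z\})\setminus \pi^{-1}(R)\bigr)=0$; transporting by the diffeomorphism $f\circ i$, which preserves measure zero, shows that $M\setminus U$ has measure zero in $M$. A closed set of measure zero in a manifold has empty interior, so the open set $U$ is dense in $M$. Continuity of $d\eta$ then forces $d\eta\equiv 0$ on $M$, and connectedness of $M$ gives $\eta\equiv w$ for some $w\in N$. Combined with surjectivity of $\phi$ this yields $f(M\times\{z\})=\phi(M)\times\{w\}=M\times\{w\}$.

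The main obstacle is precisely the density of $U$ in $M$. A priori, the critical set of $\phi$ could have positive Lebesgue measure in $M$, in which case $U=\phi^{-1}(R)$ would not be dense and the continuity argument for $d\eta$ would collapse. What saves the proof is that equality in the volume estimate transports, via the diffeomorphism $f\circ i$, back to an equality of volumes on $M$ that forces the critical set itself to have measure zero. Once this density is secured, Sard's theorem, Proposition \ref{pr:highervolume}, and standard connectedness arguments combine routinely to finish the proof.
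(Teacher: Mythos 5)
Your proof is correct, and although it rests on the same two pillars as the paper's argument --- surjectivity of $\phi=\pi\circ f\circ i$ from Proposition \ref{proposition:surj}, and Proposition \ref{pr:highervolume} applied over a Sard-generic part of $M$ to get $\vol(f(M\times\{z\}))\geq \vol(M)$ --- the way you extract the conclusion from the resulting equality of volumes is genuinely different. The paper argues by contradiction on $f(M\times\{z\})$ itself: if the image is not a fiber, the two open subsets where $Dp\neq 0$ and where $D\pi$ is an isomorphism cover the connected set $f(M\times\{z\})$ and cannot be disjoint, so there is a single point at which both hold, and one nice open around its projection already yields the strict inequality $\vol(f(M\times\{z\}))>\vol(M)$, contradicting the fvd hypothesis. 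You instead exploit the full equality case: viewing $\pi$ restricted to $f(M\times\{z\})$ as a finite covering over each component of the regular-value set, you force all sheet numbers to equal $1$, conclude from the strictness clause of Proposition \ref{pr:highervolume} that $p$ is constant on the preimage of every nice open, and then propagate $d\eta\equiv 0$ from the dense open set $U$ to all of $M$ by continuity. Your route buys slightly more information (the projection is generically one-sheeted and $M\setminus U$ has measure zero), at the cost of the covering-space bookkeeping and the density step; to your credit you identify precisely the weak point --- Sard controls critical values, not critical points, so density of $U$ is not automatic --- and you close it correctly by transporting the volume equality back through the diffeomorphism $f\circ i$. The paper's dichotomy avoids that issue entirely by needing only one good point rather than a full-measure set of them.
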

\begin{proof} Assume that $f$ is fiberwise volume decreasing at $z$.
We prove the theorem by showing that \[ \vol(f(M\times \{z\} ))>\vol(M), \]
if $f(M\times \{z\})$ is not of the form $M\times \{w\}$ for some $w\in N$.
For the remainder of the proof we will denote $f(M\times \{z\})$ by $f(M)$.

Let $\pi$ be the natural projection map of $f(M)$ onto $M$.
From Proposition \ref{proposition:surj} it follows that $\pi \circ f_{\mid M\times \{z\}}$ is surjective.
Let's look at the set $A$ of critical values of $\pi$.
This set is closed and we know by Sard's theorem that it is of measure $0$ in $M$. Take a family of
nice opens $(C_i)_{i\geq 1}$ of $M$ that are pairwise disjoint, and such that their union equals $M\cmd \widetilde{A}$ where
$\widetilde{A}\supset A$ has measure $0$. We can assume this family to be such that the $C_i$ satisfy the hypotheses
of proposition \ref{pr:highervolume}.
We conclude that $\vol(f(M))\geq\vol(M)$.

Assume there exists a nice open $C\subset M$ such that
\begin{enumerate}
\item there are open subsets $V\subset f(M)$ and $O\subset M$ with
$\phi:=\pi_{\mid V}:V \rightarrow O$ a
diffeomorphism and $\overline{C}\subset O$ ,
\item $\vol(\phi^{-1}(C))>\vol(C)$.
\end{enumerate}
We can then look at a nice family of $M$ containing $C$ to conclude that $\vol(f(M))>\vol(M)$, obtaining the desired contradiction.
It remains thus to prove the existence of a nice open $C$, satisfying the two conditions
above, in case $f(M)$ is not a fiber.

Denote $p:f(M) \rightarrow N$ the projection map.
Assume by contradiction that for all $x \in f(M)$ the differential $(Dp)_{x}=0$ whenever $(D\pi)_{x}$ is an isomorphism, then
\[ \mathcal{A}_1=\{ x \in f(M) \mid Dp_{x} \neq 0 \} ,\]
and
\[ \mathcal{A}_2=\{ x \in f(M) \mid D\pi_{x} \mbox{ is an isomorphism} \} .\]
are disjoint, open, nonempty sets. Since $f$ is a diffeomorphism,
we have that $\mathcal{A}_1\cup \mathcal{A}_2=f(M)$. Since $M$ is connected, this is a contradiction. Hence, there exists
 an element $y \in f(M)$ such that $(Dp)_y \neq 0$ and $(D\pi)_{y}$ is an isomorphism.
Take a nice open $U\subset f(M)$ consisting of such points $y$. Let $u\in U$ with $\pi(u)\notin A$.
We can find a nice open $C\subset M\cmd A$ containing $\pi(u)$ that satisfies the hypotheses of Proposition \ref{pr:highervolume}.
Now, $\vol(\phi^{-1}(C))>\vol(C)$, as desired.
\end{proof}

{\remark {\rm For $(M,N)$ satisfying the \aste-condition, the proof can be generalized to the case that $M$ is not connected.
If $M_1,M_2,\ldots ,M_{k}$ are the connected components of $M$, and $(M,N)$ satisfies the \aste-condition,
then there exist $z_1,z_2,\ldots ,z_{k} \in N$ such that
\[ f(M\times \{z\}) = (M_1\times \{z_1\}) \cup (M_2\times \{z_2\}) \cup \ldots \cup (M_{k} \times \{z_{k}\}). \]
\begin{proof}
Proposition \ref{proposition:surj} does not use the fact that $M$ is connected
and so we know that $\pi\circ f_{\mid M\times \{z\}}$ is surjective. This implies that
$\pi \circ f$ maps each $M_i\times \{z\} $ surjectively onto an $M_j$.
The same reasoning as in the proof of Theorem \ref{th:main} then shows
that $\vol(f(M_i \times \{z\} )) \geq \vol(M_j)$. Since $f$ is fvd we can conclude that
\[ \sum_{l=1}^{k} \vol(f(M_l\times \{z\})) = \sum_{l=1}^{k} \vol(M_l), \]
and so $\vol(f(M_i\times \{z\}))=\vol(M_j)$. If we suppose that $f(M_i\times \{z\})$ is not of the form $M_j\times \{z_j\}$
for some $z_j\in N$, then, as in the proof of Theorem \ref{th:main}, using the connectedness of $f(M_i\times \{z\})$,
we can find a point $y\in f(M_i\times \{z\})$ such that $Dp_y\neq 0$ and $D\pi_y$ is an isomorphism. We can thus find a nice
open $C$ of $M_j\cmd A$ containing $\pi(y)$ that satisfies the hypotheses of Proposition \ref{pr:highervolume}. Therefore,
$\vol(\phi^{-1}(C))>\vol(C)$, implying $\vol(f(M_i \times \{z\} )) > \vol(M_j)$ and giving us a contradiction.
\end{proof} }}

We obtain the following interesting corollary. 
\begin{corollary} [(Splitting Theorem)] If $M$ is a closed connected Riemannian manifold and if $N$ is a
connected Riemannian manifold such that $(M,N)$ satisfies
the \aste-condition, then the isometries of $M\times N$ split, i.e. $\Iso(M\times N)=\Iso(M)\times \Iso(N)$. \label{cor:split}
\end{corollary}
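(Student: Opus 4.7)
The plan is to apply the Main Theorem (Theorem \ref{th:main}) directly and then use the isometry condition on tangent spaces to promote the fiber-to-fiber property into a full splitting. Since every isometry is fiberwise volume preserving, hence fvd, the Main Theorem will let us conclude, for each $z\in N$, that $f(M\times\{z\})=M\times\{w\}$ for some $w\in N$. This will be the starting point.

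First, given an isometry $f:M\times N\to M\times N$, I would use the Main Theorem applied at every $z\in N$ to define a map $\bar f_2:N\to N$ by setting $\bar f_2(z)$ equal to the unique $w\in N$ with $f(M\times\{z\})=M\times\{w\}$; uniqueness uses that $M$ is closed and connected and that the fibers are disjoint. Writing $f(y,z)=(f_1(y,z),f_2(y,z))$, the identity $f(M\times\{z\})=M\times\{\bar f_2(z)\}$ immediately forces $f_2(y,z)=\bar f_2(z)$, i.e.\ $f_2$ is independent of $y$. Applying the same argument to $f^{-1}$ (also an isometry, hence fvd) shows that $\bar f_2$ is a bijection, and smoothness is inherited from $f$, so $\bar f_2\in \Diff(N)$.

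The main step, and the only real work, is to show that $f_1(y,z)$ is independent of $z$. Here I would use that $Df_{(y,z)}$ is a linear isometry of $T_yM\oplus T_zN$ onto $T_{f_1(y,z)}M\oplus T_{\bar f_2(z)}N$ with respect to the product inner product. For $v\in T_yM$ and $w\in T_zN$, one has
\[
Df_{(y,z)}(v,0)=\bigl(\partial_y f_1(v),0\bigr),\qquad
Df_{(y,z)}(0,w)=\bigl(\partial_z f_1(w),D\bar f_2(w)\bigr),
\]
since $f_2$ does not depend on $y$. Because $(v,0)$ and $(0,w)$ are orthogonal in the product metric, so are their images, which gives
\[
\bigl\langle \partial_y f_1(v),\,\partial_z f_1(w)\bigr\rangle_{f_1(y,z)}=0
\]
for all such $v,w$. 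Now $f$ restricted to the fiber $M\times\{z\}$ is a diffeomorphism onto $M\times\{\bar f_2(z)\}$, so composing with the projection to $M$ shows that $\partial_y f_1:T_yM\to T_{f_1(y,z)}M$ is a linear isomorphism, in particular surjective. Hence $\partial_z f_1(w)=0$ for every $w\in T_zN$ and every $(y,z)$. Since $N$ is connected, $f_1(y,z)$ is independent of $z$, yielding a well-defined smooth map $\bar f_1:M\to M$ with $f_1(y,z)=\bar f_1(y)$.

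Finally I would verify that $(\bar f_1,\bar f_2)\in\Iso(M)\times\Iso(N)$: plugging $(v,0)$ and $(0,w)$ into the isometry identity $\|Df(v,w)\|^2=\|v\|^2+\|w\|^2$ gives $\|D\bar f_1(v)\|=\|v\|$ and $\|D\bar f_2(w)\|=\|w\|$, so both component maps are local isometries; bijectivity follows from applying the same construction to $f^{-1}$. The only obstacle I foresee is the tangent-space computation above, and the subtlety there is merely to remember that one must use surjectivity of $\partial_y f_1$, which is precisely what the fiber-to-fiber conclusion of the Main Theorem provides.
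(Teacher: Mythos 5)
Your proof is correct and takes essentially the same approach as the paper: both first invoke the Main Theorem to get the fiber-to-fiber property (hence that $f_2$ is independent of $M$), and both then use that the isometry preserves orthogonality to the fibers to force $f_1$ to be independent of $N$. The paper phrases this second step via paths in $\{y\}\times N$ being orthogonal to every fiber, while you give the equivalent infinitesimal computation with $Df$, correctly noting that surjectivity of $\partial_y f_1$ (which comes from the fiber-to-fiber conclusion) is what kills $\partial_z f_1$.
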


\begin{proof}
Let $f=(f_1,f_2)$ be an isometry of $M\times N$. Then, $f$ satisfies the hypothesis of Theorem \ref{th:main}
and therefore $f_2$ is independent of its
$M$-coordinates. Notice that $f_2$ can thus be seen as a map from $N$ to $N$.

Let $(y,z)\in M\times N$ and denote $f_1(y,z)=x$.
A path $\gamma$ in $\{y\} \times N$, containing $(y,z)$, is
orthogonal to every fiber $M\times \{w\}$.
Since $f$ is an isometry which maps each fiber to another fiber, we have that $f\circ \gamma$
is orthogonal to each fiber $M\times \{w\}$. It is therefore a path in $\{ x \} \times N$ and connectedness of $N$
implies that
$f_1(\{y\} \times N)=\{ x \}$. Since $y\in M$ is arbitrary, we conclude
that $f_1$ does not depend on its $N$-coordinates. It can thus be seen as a map from $M$ to $M$.

\noindent Since $f$ is an isometry, we obtain that $f_1$ and $f_2$ are isometries of $M$ and $N$ respectively.
\end{proof}

\subsection{Fiberwise volume decreasing maps}
\setcounter{theorem}{0}
It is interesting to investigate which maps exactly are fvd. First of all,
we note that there
is no immediate connection with volume preserving maps. For example, on the cylinder $S^1\times \R\subset \R^3$ , one can
consider the diffeomorphism mapping $(\mbox{cos}(x), \mbox{sin}(x), y)\in S^1\times \R$ to $(\mbox{cos}(x), \mbox{sin}(x),{y\over 2})$. 
This map is clearly not volume preserving, but it
is fvd. Conversely, the diffeomorphism
\[
\begin{array}{cccc}
f: & S^1\times \R & \rightarrow & S^1\times \R \\
& (\mbox{cos}(x), \mbox{sin}(x),y) & \mapsto & (\mbox{cos}(x), \mbox{sin}(x),y+\sin(x))
\end{array}
\]
is volume preserving, since the Jacobian of the map $f$
 has determinant one at each point of $\R^2$. Yet, $f$ is not fvd.

Note further that $\fvdc(M\times N)$ has a natural group structure, because in our setting
``fiberwise volume decreasing'' and ``fiber preserving'' are equivalent notions.
Our main theorem implies the following

\begin{corollary} Given a point $y_0\in M$, consider
\[ \begin{array}{cccl}
\psi: & \fvdc(M\times N) &\rightarrow & \Diffeo(N) \\
 & (\alpha,\beta) &\mapsto & \widetilde{\beta} ,
 \end{array} \]
where
\[ \begin{array}{cccl}
\widetilde{\beta}: & N & \rightarrow & N \\
&  z& \mapsto & \beta(y_0,z).
\end{array}
\]
This definition is independent of the chosen $y_0$.
Furthermore, the map $\psi$ is a group homomorphism with kernel
\[ K= \{ f:N\rightarrow \Diffeo(M) \mid \overline{f}:M\times N\rightarrow M, (y,z)\mapsto f(z)(y) \mbox{ is differentiable.} \} \]
Additionally, there is a short exact sequence
\[ 1 \rightarrow K\cong \mbox{kernel}(\psi) \hookrightarrow \fvdc(M\times N) \stackrel{\psi}{\rightarrow}
\Diff(N) \rightarrow 1 .\] \label{cor:ses}
\end{corollary}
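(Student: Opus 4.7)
The plan is to use Theorem \ref{th:main} as the decisive structural input. Under the \aste-condition it tells us that any fvd diffeomorphism $f=(\alpha,\beta)\in \fvdc(M\times N)$ permutes the fibers $M\times\{z\}$, forcing $\beta(y,z)$ to be independent of $y\in M$. In particular $\widetilde{\beta}(z):=\beta(y_0,z)$ is independent of the base point $y_0\in M$, establishing well-definedness of $\psi$. The same theorem shows that $\fvdc(M\times N)$ is closed under composition and inversion (fvd maps are precisely the fiber-preserving diffeomorphisms), and that each $\widetilde{\beta}$ is indeed a smooth bijection of $N$: smoothness is inherited from that of $\beta$, bijectivity from that of $f$, and smoothness of the inverse by applying the same construction to $f^{-1}$, which is again fvd.

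That $\psi$ is a homomorphism would follow by a direct computation: for $f=(\alpha,\beta)$, $g=(\alpha',\beta')$ in $\fvdc(M\times N)$,
\[
p\circ(f\circ g)(y_0,z)=\beta\bigl(\alpha'(y_0,z),\beta'(y_0,z)\bigr)=\widetilde{\beta}\bigl(\widetilde{\beta'}(z)\bigr),
\]
where the last equality uses that $\beta$ depends only on its $N$-argument. Surjectivity of $\psi$ is also immediate: given $\varphi\in\Diff(N)$, the diffeomorphism $(y,z)\mapsto (y,\varphi(z))$ is fiberwise an isometry, hence fvd, and its image under $\psi$ is $\varphi$.

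For the kernel, $f\in\ker\psi$ iff $\widetilde{\beta}=\mathrm{id}_N$ iff $\beta(y,z)=z$ everywhere, so $f(y,z)=(\alpha(y,z),z)$. For each $z\in N$ the slice $\alpha(\cdot,z):M\to M$ is then a diffeomorphism (since $f$ is), and setting $F(z)(y):=\alpha(y,z)$ defines $F:N\to \Diffeo(M)$ with $\overline{F}=\alpha$ smooth, so $F\in K$. Conversely, given $F\in K$ the map $f(y,z):=(\overline{F}(y,z),z)$ is smooth and fiber-preserving; its Jacobian is block upper-triangular with the diagonal blocks $DF(z)$ and $\mathrm{id}$, both invertible, so by the inverse function theorem $f$ is a diffeomorphism. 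Its inverse necessarily has the form $(y,z)\mapsto(G(z)(y),z)$ with $G(z)=F(z)^{-1}$ and $\overline{G}=\pi\circ f^{-1}$ smooth, so $G\in K$ and $f^{-1}\in\ker\psi$ as well.

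The main technicality, rather than any deep obstacle, is bookkeeping the differentiability condition on maps $N\to \Diffeo(M)$ cleanly. Everything reduces to the observation that an element of $\ker\psi$ is encoded by the smooth function $\alpha:M\times N\to M$, and one can pass back and forth to the pointwise family $F(z)=\alpha(\cdot,z)$ via the $\overline{\,\cdot\,}$-construction without ambiguity; this makes the identification $K\cong\ker\psi$ a simple unpacking of definitions once Theorem \ref{th:main} has supplied the fiber-preservation.
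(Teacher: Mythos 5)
Your proposal is correct and follows essentially the same route as the paper: Theorem \ref{th:main} gives fiber-preservation and hence well-definedness of $\psi$, the homomorphism property is the same direct computation using that $\beta$ ignores its $M$-argument, surjectivity uses the same section $(y,z)\mapsto(y,\varphi(z))$, and the kernel is identified with $K$ by the same unpacking. Your extra care in checking via the inverse function theorem that $(\overline{F},p)$ is actually a diffeomorphism is a detail the paper leaves implicit, but it does not change the argument.
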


\begin{proof}
Theorem \ref{th:main} implies that the definition of $\psi$
is independent of the chosen $y_0\in M$.

To show that $\psi$ is a group homomorphism, let $(y,z)\in M\times N$ and
$(\alpha_1,\beta_1),(\alpha_2,\beta_2) \in \fvdc(M\times N)$. Then,
\[ (\alpha_1,\beta_1)\circ (\alpha_2,\beta_2)(y,z)= (\alpha_1(\alpha_2(y,z),\beta_2(y,z)),\beta_1(\alpha_2(y,z),\beta_2(y,z))) \]
and thus
\[ \psi((\alpha_1,\beta_1)\circ (\alpha_2,\beta_2))(z)=\beta_1(\alpha_2(y_0,z),\beta_2(y_0,z)). \]
On the other hand,
\[ \psi(\alpha_1,\beta_1) \circ \psi(\alpha_2,\beta_2)(z) = \beta_1(y_0,\beta_2(y_0,z)).\]
Both expressions are equal since $\beta_1$ doesn't depend on its first argument.

Observe that $\psi$ maps
each $(\alpha,\beta)\in \fvdc(M\times N)$ to a diffeomorphism of $N$.
This follows from the fact that $(\alpha,\beta)\in \fvdc(M\times N)$ has an inverse $(\alpha',\beta') \in \fvdc(M\times N)$
and so $\psi(\alpha',\beta')$ is an inverse for $\psi(\alpha,\beta)$. We conclude that $\psi$ is a well-defined group homomorphism.

Given a diffeomorphism $\gamma$ of $N$, define
\[ \begin{array}{cccl}
\hat{\gamma}: & M\times N & \rightarrow & N \\
& (y,z) &\rightarrow & \gamma(z).
\end{array}
\]
Let $\pi:M\times N\rightarrow M$ be the natural projection onto $M$.
Then, $(\pi,\hat{\gamma})\in \fvdc(M\times N)$ and $\psi(\pi,\hat{\gamma})=\gamma$. Hence, $\psi$
is surjective.

If $f$ is an element of $K$ and $p:M\times N \rightarrow N$ is the natural projection map, then $(\overline{f},p)$ is clearly
an element of $\mbox{kernel}(\psi)$. Conversely, if $(\alpha,\beta)\in \mbox{kernel}(\psi)$, then
$\beta=p$ and $\alpha=\overline{g}$ for some $g\in K$. There is thus a bijective correspondence between $K$ and kernel$(\psi)$.
We define the group law on $K$ such that this
bijection is an isomorphism.
\end{proof}


It would be desirable to have an ``easier'' description of $K$.
For this, let us look at the set
\[ \mathcal{D}=\{ f: N \rightarrow \Diffeo(M) \}, \]
equipped with
the following group law:
\[ f*g: N\rightarrow \Diffeo(M), z\mapsto f(z)\circ g(z) \mbox{ } \forall f,g \in \mathcal{D} .\]
It is clear that $K<(\mathcal{D},*)$ and that $K$ contains
those elements of $\mathcal{D}$ that satisfy a certain differentiability condition: for a given $f\in K$,
the diffeomorphisms $f(z)$ should change
``smoothly in $z$'' in order for the corresponding map $\overline{f}$
to be differentiable.
Recall that $\Diffeo(M)$ need not be a differentiable manifold, but that it does have
the structure of a Fr\'echet manifold. In fact, it is an open subset of the Fr\'echet manifold $C^{\infty}(M)$ of smooth
self-maps
of $M$ (see \cite{Leslie}, \cite{Onish}). 
We will show that
\[ K= \{ f\in \mathcal{D} \mid f \mbox{ is Fr\'echet }C^{\infty} \}.\]

Let us start by fixing some notation. Take $g\in C^{\infty}(M)$. Consider
the tangent bundle $\pi:TM \rightarrow M$ and denote its pullback under $g$ by $g^*(TM)$:
\[ g^*(TM)=\{ (y,\epsilon) \mid y\in M, \epsilon \in TM \mbox{ with } \pi(\epsilon)=g(y) \} .\]
Given an open, relatively compact set $U\subset M$, we say that
$TM_{\mid U}$ is trivial if
$\overline{U}$
is contained in the image of a coordinate chart $x$. Then, there is local trivialization mapping $v\in T_yM, y\in \mbox{Im}(x)$ to
$(y,b_1,b_2,\ldots ,b_n)\in \mbox{Im}(x)\times \R^n$ where the real numbers $b_i$ are the coordinates of $v$ relative to the basis of
$T_{y}M$ induced by $x$.
Furthermore, we shall say that  $g^*(TM)_{\mid U}$ is trivial if $x$ can be chosen such that
$g(\mbox{Im}(x))\subset \mbox{Im}(\widetilde{x})$
for some chart $\widetilde{x}$. Again, there is a local trivialization mapping
$(y,\epsilon)\in g^*(TM)$ with $y\in \mbox{Im}(x)$ to
$(y,c_1,c_2,\ldots ,c_n) \in \mbox{Im}(x)\times \R^n$ where the $c_i$ are the coordinates of $\epsilon$ relative to the basis of $T_{g(y)}M$
induced by $\widetilde{x}$.\\

Cover $M$ by finitely many open sets $U_{\alpha}$,
such that each $g^*(TM)_{\mid U_\alpha}$ is trivial. Denote the corresponding charts, analogously to $x$ and $\widetilde{x}$
above, by $x_\alpha$ and
$\widetilde{x_\alpha}$.
We call the finite set of triples
\[ (U_\alpha,x_\alpha, \widetilde{x_\alpha}) \]
a trivializing family for $g:M\rightarrow M$.
By definition of trivializing family, the restriction to one of the $U_\alpha$
of a section $s:M\rightarrow g^*(TM)$,
can be seen as a map
$s^{\alpha}:U_\alpha \rightarrow U_\alpha \times \R^n$.
The first component $U_\alpha \rightarrow U_\alpha$ is just the identity. Using $x_\alpha$, we denote the
second component map $\overline{s^\alpha}:x_\alpha^{-1}(U_\alpha) \rightarrow \R^n$.
By definition, we say that a sequence $(s_n)_{n\in \N}$ converges
to $s$ in the Fr\'echet space of smooth sections of $g^*(TM)$ if
\[
\lim_{n\rightarrow \infty} \frac{\partial^l \overline{s^\alpha_n}}
{\partial k_1 \partial k_2 \ldots \partial k_l}= 
\frac{\partial^l \overline{s^\alpha}} {\partial k_1 \partial k_2 \ldots \partial k_l}
\mbox{ uniformly over } x_\alpha^{-1}(U_\alpha) \]
for all $(U_\alpha,x_\alpha, \widetilde{x_\alpha})$, all $l\in \N$ and all $k_1,k_2,\ldots ,k_l \in \R^n.$
%

\begin{proposition} A map
$f:N\rightarrow C^{\infty}(M)$ is Fr\'echet $C^{\infty}$ if and only if the corresponding map
$\overline{f}:M\times N \rightarrow M, (m,n)\mapsto f(n)(m)$ is $C^{\infty}$.
\end{proposition}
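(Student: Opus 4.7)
The proof is local in $N$, so I fix $z_0 \in N$ and a trivializing family $(U_\alpha, x_\alpha, \widetilde{x_\alpha})$ for the diffeomorphism $f(z_0)$, together with a chart $y : V \to N$ at $z_0$ with $V \subset \R^d$ open. Both hypotheses imply at least $C^0$-continuity of $z \mapsto f(z)$ in $C^\infty(M)$, so after shrinking $V$ I may assume $f(y(\zeta))(\mathrm{Im}\,x_\alpha) \subset \mathrm{Im}\,\widetilde{x_\alpha}$ for every $\zeta \in V$ and every $\alpha$ in the finite family. In these coordinates, $\overline{f}$ is represented on a neighbourhood of $M \times \{z_0\}$ by the finite collection of maps
\[
F_\alpha : x_\alpha^{-1}(U_\alpha) \times V \to \R^n, \qquad F_\alpha(q,\zeta) = \widetilde{x_\alpha}^{-1}\bigl(\overline{f}(x_\alpha(q), y(\zeta))\bigr).
\]
Because the chart maps are diffeomorphisms and the $U_\alpha$ cover $M$, the map $\overline{f}$ is $C^\infty$ (on this neighbourhood, hence, letting $z_0$ vary, everywhere) if and only if each $F_\alpha$ is jointly $C^\infty$ in $(q,\zeta)$.

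Next I translate the Fr\'echet condition into the same coordinates. Since $C^\infty(M)$ is modelled near $f(z_0)$ on the Fr\'echet space $\Gamma(f(z_0)^*TM)$, which the trivializing family identifies with a subspace of $\prod_\alpha C^\infty(x_\alpha^{-1}(U_\alpha),\R^n)$ carrying the topology of uniform convergence of all $q$-derivatives on compacta, the $k$-th Michal--Bastiani derivative of $f$ at $\zeta$ in direction $(v_1,\dots,v_k)\in (\R^d)^k$ has $\alpha$-component $q \mapsto D^k_\zeta F_\alpha(q,\zeta)(v_1,\dots,v_k)$. Thus $f$ is Fr\'echet $C^\infty$ exactly when, for every multi-index $\beta$ in $\zeta$, the partial $\partial^\beta_\zeta F_\alpha(q,\zeta)$ exists, is smooth in $q$, and depends continuously on $\zeta$ in the $C^\infty$-topology on $q$, i.e.\ with uniform convergence of all $q$-derivatives on compact subsets of $x_\alpha^{-1}(U_\alpha)$.

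For the ``only if'' direction I start from joint smoothness of $F_\alpha$: Schwarz's theorem gives existence and equality of all iterated partials $\partial^\beta_\zeta \partial^\gamma_q F_\alpha = \partial^\gamma_q \partial^\beta_\zeta F_\alpha$, and joint continuity of such partials supplies the needed uniform control on compacta, so $f$ is Fr\'echet $C^\infty$. The substantive direction is the converse: assuming $f$ Fr\'echet $C^\infty$, I need to conclude joint smoothness of each $F_\alpha$. Here the coordinate reformulation above yields existence of $\partial^\beta_\zeta F_\alpha(q,\zeta)$ together with smoothness in $q$ and $C^\infty$-topology continuity in $\zeta$; the latter is precisely uniform convergence of all $q$-derivatives on compacta, so all mixed iterated partials $\partial^\gamma_q \partial^\beta_\zeta F_\alpha$ exist and are jointly continuous in $(q,\zeta)$. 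I then invoke the classical fact that a function on an open subset of Euclidean space whose iterated partials of every order all exist and are jointly continuous is itself $C^\infty$; applied to $F_\alpha$, this yields $\overline{f}\in C^\infty$.

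The principal obstacle is this converse step: matching the abstract Michal--Bastiani derivatives of $f$ with the pointwise $\zeta$-partials of $F_\alpha$ and extracting the joint control needed to interchange $\zeta$- and $q$-differentiation. The crucial ingredient is that the topology on $\Gamma(f(z_0)^*TM)$ is the $C^\infty$-topology on the $q$-variable, which is exactly strong enough to convert separate smoothness in $q$ and in $\zeta$ into joint smoothness; once the coordinate reduction is in place, everything else is standard Fr\'echet-space calculus.
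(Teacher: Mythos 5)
Your argument is correct in substance, and for the direction ``$\overline{f}$ smooth $\Rightarrow$ $f$ Fr\'echet smooth'' it is essentially the paper's: identify the candidate Fr\'echet derivatives of $f$ with the $\zeta$-partials of the local coordinate representations $F_\alpha$, and obtain convergence of the difference quotients in the $C^\infty$-topology from joint continuity of the mixed partials on compacta --- the paper isolates exactly this last step as a separate lemma on uniform convergence of difference quotients over a compact set. Where you genuinely diverge is the other direction, ``$f$ Fr\'echet smooth $\Rightarrow$ $\overline{f}$ smooth'': the paper factors $\overline{f}$ through $(y,z)\mapsto (y,f(z))$ followed by the evaluation map $i:M\times C^{\infty}(M)\rightarrow M$, computes all Fr\'echet differentials of $i$ explicitly in exponential charts, and concludes by the chain rule; you instead stay entirely in finite-dimensional coordinates, use the strength of the $C^\infty$-topology on sections to commute $\partial_q$ past the $\zeta$-limits so that all interleaved iterated partials of $F_\alpha$ exist and are jointly continuous, and then invoke the classical criterion that such a function is $C^\infty$. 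Your route is more elementary and avoids Fr\'echet-space chain-rule bookkeeping; the paper's route has the side benefit of establishing Fr\'echet smoothness of the evaluation map itself, which is a reusable fact. Two minor cautions, neither a real gap: your labels ``only if''/``converse'' are swapped relative to the statement as phrased; and your identification of the Michal--Bastiani derivatives of $f$ with the $\zeta$-partials of $F_\alpha$ tacitly replaces the exponential charts of $C^{\infty}(M)$ (modelled on sections of $f(z_0)^*TM$) by the coordinate charts $\widetilde{x_\alpha}$ --- passing between the two requires the smoothness of $\exp$ and an omega-lemma-type statement, a point the paper also treats only implicitly.
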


\begin{proof}
Assume first
that $f$ is Fr\'echet $C^{\infty}$. Then,
\[ \begin{array}{cccl} 
j: & M\times N & \rightarrow & M\times C^\infty(M) \\
 & (y,z) & \mapsto & (y,f(z))
 \end{array} \]
is Fr\'echet $C^{\infty}$. So, differentiability of $\overline{f}$ is implied by Fr\'echet differentiability of
\[ \begin{array}{cccl}
i: & M\times C^{\infty}(M)& \rightarrow & M \\
 & (y,g) & \mapsto & g(y).
 \end{array} \]

Choose $(y,g)\in M \times C^{\infty}(M)$, fix a trivializing family for $g:M\rightarrow M$ and
denote $S$ the Fr\'echet space of smooth sections of the pullback bundle $g^*(TM)$.
Take an open neighbourhood $U=U_1\times U_2\ni (y,g)$ such that
$(U_1,x_1,\widetilde{x_1})$ is inside the chosen compactifying family.
Denote $x_1^{-1}(U_1)=O$, $\mbox{Im}(\widetilde{x_1})=W$, $C^{\infty}(M)=\{ f: M \rightarrow M \mid f \mbox{ is } C^\infty\}$
and let $x_2:\mathcal{O} \rightarrow C^{\infty}(M), \mathcal{O}\subset S$ be a chart with image $U_2$.
Define
\[ \begin{array}{cccl}
x: & O\times \mathcal{O} & \rightarrow & M\times C^{\infty}(M) \\
 & (o,v)& \mapsto & (x_1(o),x_2(v)).
 \end{array}
 \]
Using the structure of $C^{\infty}(M)$ as a Fr\'echet manifold, we can assume that
$i\circ x:(o,v) \mapsto \exp (\pi(v(x_1(o))))$ with $\pi$ the natural projection of $g^*(TM)$ onto $TM$.
Now, Fr\'echet differentiability of $i$ on $U_1\times U_2$ is equivalent with Fr\'echet differentiability of
\[
\begin{array}{cccl}
\widetilde{i}:= \widetilde{x_1}^{-1} \circ i \circ x: & O\times \mathcal{O} &\rightarrow & \widetilde{x_1}^{-1}(W) \\
 & (o,v) & \rightarrow & \widetilde{x_1}^{-1}(\exp (\pi(v(x_1(o))))),
 \end{array}
 \]
on $O\times \mathcal{O}$ (where we can assume without loss of generality that $U_2$ is small enough for $\widetilde{i}$ to be defined).
Since $\exp:TM \rightarrow M$ is smooth, it suffices to prove that
\[
\begin{array}{cccl}
\gamma: & O\times \mathcal{O} & \rightarrow & TM_{\mid \mbox{Im}(\widetilde{x_1})} \cong \mbox{Im}(\widetilde{x_1})\times \R^n \\
& (o,v) &\mapsto & \pi\circ v(x_1(o))
\end{array}
\]
is Fr\'echet differentiable on $O\times \mathcal{O}$. By differentiability of $g$,
we only need to prove Fr\'echet differentiability for the second component map
$\gamma_2: (o,v)\mapsto \overline{v^1}(o)$.
It is an easy exercise to prove by induction on $l$ that the $l^{\mbox{th}}$ differential $D^l \gamma_2$
exists and that it is given by
\[
\begin{array}{cccl}
D^l\gamma_2 :& (O \times \mathcal{O})\times (\R^n \times S)^l & \rightarrow & \R^n \\
 & (o,s,k_1,h_1,k_2,h_2,\ldots ,k_l,h_l) & \mapsto &
\frac{\partial^l \overline{s^1}}{\partial k_1 \partial k_2 \ldots \partial k_l} (o) +
\sum_{j=1}^l \frac{\partial^{l-1} \overline{h_j^1}}{\partial k_1 \partial k_2 \ldots \hat{\partial k_j} \ldots \partial k_l} (o).
\end{array} \]
Continuity of the differentials of $\gamma_2$ then follows automatically and so we have proven the forward claim of the proposition.\\

To prove the converse, choose $z\in N^k$, denote $f(z)=g$ and for some chart $\overline{x}$ of $N$,
let $V\subset \mbox{Im}(\overline{x})$ be a neighbourhood of $z$ in $N$.
Since $M$ is compact, we can choose $V$ such that the map
\[
\begin{array}{cccl}
v: & M\times \overline{x}^{-1}(V)& \rightarrow & g^*(TM) \\
 & (y,w) & \mapsto & (y, (\exp_{g(y)})^{-1} \overline{f}(y,\overline{x}(w)))
 \end{array}
 \]
is well-defined in the sense that for all $(y,w)\in M\times \overline{x}^{-1}(V)$
there is a totally normal neighbourhood containing
$g(y)$ and $\overline{f}(y,\overline{x}(w))$. The differentiability of $\overline{f}$ clearly implies that of $v$.
It suffices to prove
Fr\'echet differentiability of
\[
\begin{array}{cccl}
\widetilde{f}: & \overline{x}^{-1}(V) & \rightarrow & S\\
& w & \mapsto & v(\cdot,w).
\end{array} \]
%
Fix a compactifying family $(U_\alpha,x_\alpha, \widetilde{x_\alpha})_{\alpha \in \mathcal{A}}$ for $g:M\rightarrow M$
where $\mathcal{A}$ is some index set.
We claim that the $l^{\mbox{th}}$
differential $D^l(\widetilde{f})$ maps $(w',h_1,h_2,\ldots ,h_l)\in \overline{x}^{-1}(V)\times (\R^k)^l$ to the section
$s$ such that
\[ 
\begin{array}{cllll}
\overline{s^{\alpha}}: & x_\alpha^{-1}(U_\alpha) &  \rightarrow & \R^n & \\
& o' & \mapsto & \frac{\partial^l \overline{v(\cdot,w)^{\alpha}}(o)}{\partial h_1 \partial h_2 \ldots \partial h_l} (o',w'),& \forall \alpha \in \mathcal{A}.
 \end{array}
 \]
The claim would imply continuity of $D^l(\widetilde{f})$. Further, in order for the $\overline{s^\alpha}$ to determine a section,
we need to show that for $\alpha,\beta \in \mathcal{A},
y\in U_\alpha \cap U_\beta$, the vector in $T_{g(y)}M$ with coordinates
$\frac{\partial^l \overline{v(\cdot,w)^{\alpha}}(o)}{\partial h_1 \partial h_2 \ldots \partial h_l} (x_\alpha^{-1}(y),w')$
relative to the basis induced by $\widetilde{x_\alpha}$ is the same as the vector with coordinates
$\frac{\partial^l \overline{v(\cdot,w)^{\beta}}(o)}{\partial h_1 \partial h_2 \ldots \partial h_l} (x_\beta^{-1}(y),w')$
relative to the basis induced by $\widetilde{x_\beta}$. To this end, let $A$ be the change of base matrix
from
the basis of $T_{g(y)}M$ induced by $\widetilde{x_\alpha}$ to
the one induced by
$\widetilde{x_\beta}$. It is clear by definition that
\[ A(\overline{v(\cdot,w)^{\alpha}}(x_\alpha^{-1}(y)))=\overline{v(\cdot,w)^\beta}(x_\beta^{-1}(y)), \forall w \in \overline{x}^{-1}(V).\]
We obtain the desired equality since $D^l$ only involves partial derivatives in the second coordinates of $v$.

It remains to prove the claim. By induction, assume the hypothesis is true for some natural number $l$, let us prove it for $l+1$.
We choose $(U_\alpha,x_\alpha,\widetilde{x_\alpha})$ inside our trivializing family.
Let $j\in \N$ and $u_1,u_2,\ldots ,u_j \in \R^n$.
We need to prove that
\[
\frac{1}{t} \frac{\partial^j \overline{D^l(\widetilde{f})(w'+th_{l+1},h_1,h_2,\ldots ,h_l)^{\alpha}}-
\partial^j \overline{D^l(\widetilde{f})(w',h_1,h_2,\ldots ,h_l)^{\alpha}}}
{\partial u_1 \partial u_2 \ldots \partial u_j} \]
converges uniformly for $t\rightarrow 0$ over $x_\alpha^{-1}(U_\alpha)$ to
\[ \frac{\partial^{l+1+j}\overline{v(\cdot,w)^{\alpha}}(y)}{\partial h_1 \partial h_2 \ldots \partial h_{l+1}
\partial u_1 \partial u_2 \ldots \partial u_j} . \]
Pointwise convergence is immediate by differentiability of $v$. Uniform convergence follows from the lemma below.
\end{proof}

\begin{lm} Given $n,k,d \in \N, h\in \R^k$, a $C^1$-map $v:\R^n \times \R^k \rightarrow \R^d$, and
a compact subset $K\subset \R^n$, then
\[ \lim_{t\rightarrow 0} \frac{1}{t}(v(x,y+th)-v(x,y)) = \frac{\partial v}{\partial h}(x,y) \]
uniformly over $x\in K$.
\end{lm}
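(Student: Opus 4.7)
The plan is to reduce the lemma to the uniform continuity of the partial derivative $\partial v / \partial h$ on a suitable compact set, using the fundamental theorem of calculus to express the difference quotient as an average of directional derivatives.

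First I would fix $y \in \R^k$ and $h \in \R^k$, and for each $x \in K$ and small $t \neq 0$ write
\[ \frac{1}{t}\bigl(v(x,y+th)-v(x,y)\bigr) = \int_0^1 \frac{\partial v}{\partial h}(x,y+sth)\,ds, \]
which is justified because the map $s \mapsto v(x, y+sth)$ is $C^1$ on $[0,1]$ (as $v$ is $C^1$) and has derivative $t\,\frac{\partial v}{\partial h}(x, y+sth)$. Subtracting $\frac{\partial v}{\partial h}(x,y) = \int_0^1 \frac{\partial v}{\partial h}(x,y)\,ds$ gives
\[ \frac{1}{t}\bigl(v(x,y+th)-v(x,y)\bigr) - \frac{\partial v}{\partial h}(x,y) = \int_0^1 \left[\frac{\partial v}{\partial h}(x,y+sth) - \frac{\partial v}{\partial h}(x,y)\right] ds. \]

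Next, I would fix once and for all some $t_0 > 0$ and consider the compact set
\[ L = K \times \{y + sth : s \in [0,1],\ |t| \leq t_0\} \subset \R^n \times \R^k, \]
which is compact as the continuous image of $K \times [0,1] \times [-t_0, t_0]$. Since $v$ is $C^1$, the map $\partial v / \partial h$ is continuous, hence uniformly continuous on $L$. Given $\varepsilon > 0$, pick $\delta > 0$ such that $\|(x,y') - (x,y'')\| < \delta$ with $(x,y'), (x,y'') \in L$ implies the corresponding values of $\partial v / \partial h$ differ by less than $\varepsilon$. Then for $|t| < \min(t_0, \delta / (|h|+1))$ and every $s \in [0,1]$, one has $|sth| < \delta$, so the integrand above is bounded in norm by $\varepsilon$ uniformly in $x \in K$; taking the integral gives the desired uniform estimate.

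The only nontrivial point is arranging the compact set $L$ so that uniform continuity applies simultaneously to all $x \in K$ and all small $t$; beyond that the argument is routine, as the integral representation trivially transports uniformity from the derivative to the difference quotient. There is no real obstacle — the compactness of $K$ is what makes everything go through, and this is exactly the ingredient needed so that the pointwise convergence statement of differentiability of $v$ in the second variable upgrades to uniform convergence in the first variable.
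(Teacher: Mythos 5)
Your proof is correct, but it takes a different route from the paper's. You argue directly: you write the difference quotient as $\int_0^1 \frac{\partial v}{\partial h}(x,y+sth)\,ds$ via the fundamental theorem of calculus and then invoke uniform continuity of $\frac{\partial v}{\partial h}$ on the compact set $L=K\times\{y+uh : \lvert u\rvert\le t_0\}$ to bound the integrand by $\varepsilon$ for all $x\in K$ simultaneously. The paper instead argues by contradiction: it first reduces to $d=1$, extracts sequences $t_N\to 0$, $x_N\in K$ witnessing the failure of uniformity, converts the difference quotient into a value $\frac{\partial v}{\partial h}(x_N,y+t_N'h)$ of the derivative via the scalar mean value theorem, and then derives a contradiction from compactness of $K$ and continuity of $\frac{\partial v}{\partial h}$. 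Both proofs rest on the same two ingredients (continuity of the first derivative and compactness of $K$), but your integral representation handles vector-valued $v$ without the componentwise reduction and yields an explicit, quantitative modulus of convergence ($\lvert t\rvert<\min(t_0,\delta/(\lvert h\rvert+1))$), at the cost of being slightly longer; the paper's contradiction argument is more terse but purely qualitative.
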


\begin{proof}
Without loss of generality we can assume that $d=1$. Assume, by contradiction, that the convergence is not uniform over $K$. Then,
\[ \exists \epsilon > 0  \ \forall N\in \N \ \exists t_N<\frac{1}{N}\ \exists x_N\in K \mbox{ such that }
\lvert \frac{1}{t_N}(v(x_N,y+t_Nh)-v(x_N,y)) - \frac{\partial v}{\partial h}(x_N,y) \rvert \geq \epsilon .\]
Consequently,
\[ \exists \epsilon > 0 \ \forall N\in \N \ \exists t_N'<\frac{1}{N} \ \exists x_N\in K \mbox{ such that }
\lvert \frac{\partial v}{\partial h}(x_N,y+t_N'h)- \frac{\partial v}{\partial h}(x_N,y) \rvert \geq \epsilon .\]
Since $K$ is compact, continuity of $\frac{\partial v}{\partial h}(x,y)$ gives us a contradiction.
\end{proof}

We obtain the following
\begin{theorem}
We have the following short exact sequence:
\[ 1 \rightarrow K \hookrightarrow \fvdc(M\times N) \stackrel{\psi}{\rightarrow}
\Diff(N) \rightarrow 1 \] 
with $\psi$ as in corollary \ref{cor:ses}, $K\cong \{ f:N\rightarrow \Diffeo(M) \mid f \mbox{ is Fr\'echet differentiable} \}$.
\end{theorem}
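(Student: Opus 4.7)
The plan is simply to assemble the two ingredients already established in this subsection. Corollary~\ref{cor:ses} has already produced the very same short exact sequence
\[ 1 \rightarrow K_0 \hookrightarrow \fvdc(M\times N) \stackrel{\psi}{\rightarrow} \Diff(N) \rightarrow 1, \]
except that the kernel appears there as
\[ K_0 := \bigl\{ f : N \to \Diffeo(M) \,\bigm|\, \overline{f} : M\times N \to M,\ (y,z)\mapsto f(z)(y),\ \text{is $C^\infty$} \bigr\}. \]
In that corollary it is also verified that $\psi$ is well-defined, a group homomorphism, and surjective (via the splitting $\gamma \mapsto (\pi,\hat\gamma)$), and that the kernel is in canonical bijection with $K_0$ (with the group law on $K_0$ transported along this bijection). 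So the exactness at each of the three middle positions is already in hand.

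All that remains, therefore, is to identify the set $K_0$ with the set $K$ of Fréchet $C^\infty$ maps $N \to \Diffeo(M)$ appearing in the theorem statement. This is exactly what the Proposition immediately preceding the theorem delivers: it asserts that a map $f : N \to C^\infty(M)$ is Fréchet $C^\infty$ if and only if $\overline{f} : M\times N \to M$ is $C^\infty$ in the ordinary sense. Since $\Diffeo(M)$ is an open submanifold of the Fréchet manifold $C^\infty(M)$, restricting to maps whose image lies in $\Diffeo(M)$ yields $K_0 = K$ as subsets of $\mathcal{D}$, and the group structure that was transported to $K_0$ is the one described after the corollary. Substituting this identification into the sequence of Corollary~\ref{cor:ses} gives the claimed short exact sequence and completes the proof.

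The only step that is not completely bookkeeping is the equivalence of the two differentiability notions, but this is precisely the preceding Proposition and is assumed here. No further genuinely new argument is required for the theorem itself.
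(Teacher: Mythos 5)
Your proposal is correct and matches the paper's own route exactly: the paper states this theorem immediately after the Proposition with no separate argument, precisely because it is obtained by substituting the Proposition's equivalence (Fr\'echet $C^\infty$ for $f$ $\Leftrightarrow$ ordinary $C^\infty$ for $\overline{f}$) into the kernel description of Corollary~\ref{cor:ses}. Nothing further is needed.
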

\section{Properly discontinuous actions \label{sc:corollaries}}
\subsection{The Bieberbach theorems}

A group $\Gamma$ acts {\em properly discontinuously} on a space $X$ if the set
\[ \{\gamma \in \Gamma \mid \gamma K \cap K \neq \phi \} \]
is finite for any compact $K\subset X$.
A {\em $k$-dimensional crystallographic group} is a group acting isometrically,
properly discontinuously and cocompactly on $\R^k$. Its structure and some of its properties are described by
the three famous Bieberbach theorems (see \cite{BB1},
\cite{BB2}, \cite{BB3}). Let us recall what they are.
\begin{bieberbach} {\rm Let $\Gamma\subset \R^k \rtimes O(k)=\Iso(\R^k)$ be a $k$-dimensional crystallographic group. Then
$\Gamma$ contains a finite index subgroup $\Gamma^*=\Gamma \cap \R^k$ which is a uniform lattice,
i.e.\! a discrete cocompact subgroup
of $\R^k$.}
\end{bieberbach}
\begin{bieberbach} {\rm Let $\Gamma_1, \Gamma_2 \subset \R^k \rtimes O(k)$ be two $k$-dimensional crystallographic groups.
If $\Gamma_1$ and $\Gamma_2$ are
isomorphic, then they are conjugated by an element of $\mbox{Aff}(\R^k)=\R^k\rtimes GL(k,\R)$.}
\end{bieberbach}
\begin{bieberbach} {\rm Up to isomorphism, there are only finitely many $k$-dimensional crystallographic groups.}
\end{bieberbach}

All three Bieberbach theorems have been generalized to the case of almost-crystallographic groups.
\begin{definition} An {\em almost-crystallographic group} is a group
that acts properly discontinuously, cocompactly and isometrically on
a simply connected, connected, nilpotent Lie
group $N$ that is equipped with a left-invariant metric.\label{def:almcrg}
\end{definition}

The left-invariant metric on $N$ is determined by the choice of an inner product on the Lie algebra $\eta$ of $N$.
Then, $\Iso(N)=N\rtimes C$ where $C$ is the group of automorphisms of $N$ whose differential at the identity preserves the
chosen inner product on $\eta$ (see \cite{wilson}).\\
In $1960$, Auslander generalized the first Bieberbach theorem to almost-crystallographic groups.
{\bieberbach [(Generalization first Bieberbach theorem, Auslander, \cite{Auslander})] Let $\Gamma \subset N\rtimes C$
be an almost-crystallographic group. Then $\Gamma$
contains a finite index subgroup $\Gamma^*=\Gamma\cap N$ which is a uniform lattice of $N$.\label{bb:Auslander}}

It turns out that the first Bieberbach theorem can be generalized in our setting.
\begin{theorem}
Let $M$ be a closed connected Riemannian manifold and let $N$ be a simply
connected, connected, nilpotent Lie group equipped with a left invariant metric.
If $\Gamma$ is a group acting properly discontinuously, cocompactly and isometrically on $M\times N$,
then $\Gamma$ contains a finite index subgroup isomorphic to a uniform lattice of $N$. \label{th:gb}
\end{theorem}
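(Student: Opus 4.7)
The plan is to reduce the theorem to Auslander's generalization of the first Bieberbach theorem (Bieberbach~\ref{bb:Auslander}) via the Splitting Theorem (Corollary~\ref{cor:split}).

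First, I would verify that $(M,N)$ satisfies the \aste-condition. Since $N$ is a simply connected, connected, nilpotent Lie group, the exponential map yields a diffeomorphism $N\cong \R^{\dim N}$, so $N$ is contractible. Hence the projection $\pi\colon M\times N\to M$ is a homotopy equivalence, and $\pi^*\colon H^n(M;\Z_2)\to H^n(M\times N;\Z_2)$ is an isomorphism. By the Splitting Theorem, $\Iso(M\times N)=\Iso(M)\times \Iso(N)$, and writing $\Iso(N)=N\rtimes C$ with $C$ compact, we have $\Gamma \subset \Iso(M)\times (N\rtimes C)$.

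Let $p_N$ denote the projection onto $\Iso(N)$ and set $H:=p_N(\Gamma)$. The next step is to show that $H$ acts properly discontinuously, cocompactly and isometrically on $N$, i.e., $H$ is almost-crystallographic. Cocompactness follows by projecting a compact fundamental domain for $\Gamma$ on $M\times N$ onto $N$. For proper discontinuity, given compact $K\subset N$, any $h=p_N(\gamma)\in H$ with $hK\cap K\neq\emptyset$ satisfies $\gamma(M\times K)\cap(M\times K)\neq\emptyset$ (picking $z\in K$ with $hz\in K$ and any $y\in M$, we get $\gamma(y,z)=(p_M(\gamma)y,hz)\in M\times K$), so finiteness of such $\gamma$ bounds the number of such $h$. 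Applying Bieberbach~\ref{bb:Auslander}, $H^*:=H\cap N$ is a uniform lattice of $N$ of finite index in $H$. Pulling back, $\Gamma_1:=p_N^{-1}(H^*)\cap \Gamma$ has finite index in $\Gamma$ and fits into $1\to K_0\to \Gamma_1\to H^*\to 1$, where $K_0=\Gamma\cap (\Iso(M)\times\{1\})$. By Myers--Steenrod, $\Iso(M)$ is compact; proper discontinuity of $\Gamma$ forces $K_0$ to be a discrete subgroup of this compact group, hence finite.

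The main obstacle is to extract from $\Gamma_1$ a finite-index subgroup that is actually isomorphic to a uniform lattice of $N$, rather than merely an extension thereof. The plan is to consider the centralizer $\Gamma_2:=C_{\Gamma_1}(K_0)$, which has finite index in $\Gamma_1$ (since $\Gamma_1$ acts on the finite set $K_0$ by conjugation). Then $\Gamma_2$ is a central extension of a finite-index subgroup of the torsion-free nilpotent group $H^*$ by the finite abelian group $Z(K_0)=\Gamma_2\cap K_0$, hence is itself a finitely generated nilpotent group. By Mal'cev's theorem, $\Gamma_2$ contains a torsion-free subgroup $\Gamma_3$ of finite index. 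Since the torsion subgroup of $\Gamma_2$ is exactly $Z(K_0)$ (any torsion element projects to the trivial element of the torsion-free group $H^*$), one has $\Gamma_3\cap K_0=\Gamma_3\cap Z(K_0)=\{1\}$, so $p_N$ restricts to an injection of $\Gamma_3$ onto a finite-index subgroup of $H^*$, which is still a uniform lattice of $N$. Thus $\Gamma_3$ is the desired finite-index subgroup of $\Gamma$ isomorphic to a uniform lattice of $N$.
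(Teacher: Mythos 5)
Your proof is correct, and its overall architecture matches the paper's: both deduce the \aste-condition from contractibility of $N$, invoke the Splitting Theorem to embed $\Gamma$ in $\Iso(M)\times\Iso(N)$, project to $\Iso(N)$ to get an almost-crystallographic image with finite kernel, and then apply Auslander's theorem. Where you genuinely diverge is in the final step, namely how the finite normal kernel is eliminated. The paper argues that $\overline{\Gamma}$ is virtually a uniform lattice, hence poly-(cyclic or finite); therefore $\Gamma$ itself is poly-(cyclic or finite), so it contains a finite-index poly-$\Z$ (in particular torsion-free) subgroup, which injects under the projection and is then fed into Auslander's theorem a second time. You instead pull back the lattice $H^{*}$ first, pass to the centralizer of the finite kernel $K_0$ to obtain a finitely generated nilpotent group (a central extension of a finite-index subgroup of $H^{*}$ by the finite abelian group $Z(K_0)$), and use Mal'cev to extract a torsion-free finite-index subgroup, which then injects onto a finite-index subgroup of $H^{*}$. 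Both routes come down to producing a torsion-free finite-index subgroup of $\Gamma$ that meets the kernel trivially; yours trades the Hirsch-type structure theory of poly-(cyclic or finite) groups for Mal'cev's theorem on finitely generated nilpotent groups, applies Auslander only once, and closes with the elementary fact that finite-index subgroups of uniform lattices are uniform lattices. You are also more careful than the paper in checking that the image in $\Iso(N)$ really does act properly discontinuously and cocompactly, a point the paper dismisses as clear.
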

\begin{proof}
Since $N$ is contractible, we have that $(M,N)$ satisfies the \aste-condition.
Corollary \ref{cor:split} thus implies that $\Iso(M\times N)=\Iso(M)\times \Iso(N)$.
Denote
\[ \psi :\Iso(M\times N) \rightarrow \Iso(N) \]
the canonical projection. Let
$\overline{\Gamma}=\psi(\Gamma)$ and let $\Gamma_1$ be the kernel of $\psi_{\mid \Gamma}$. We obtain the following
short exact sequence:
\[ 1\rightarrow \Gamma_1 \rightarrow \Gamma \rightarrow \overline{\Gamma} \rightarrow 1 .\]
Since $\Gamma$ acts properly discontinuously and since $\Gamma_1\subset \Gamma$
maps $M \times \{1\}$ to itself, we have that $\Gamma_1$ is finite.
Clearly, $\overline{\Gamma}$ is an almost-crystallographic group. Theorem
\ref{bb:Auslander} then shows that $\overline{\Gamma}$ contains a finite index
subgroup isomorphic to a uniform lattice of $N$. It is thus virtually-(finitely generated and nilpotent).
Hence, it is poly-(cyclic or finite). In total, we have that
$\Gamma$ is poly-(cyclic or finite) and therefore poly-$\Z$-by-finite. We obtain the following short exact sequence:
\[ 1 \rightarrow P\Z \rightarrow \Gamma \rightarrow F \rightarrow 1, \]
where $P\Z$ is a poly-$\Z$ group and $F$ is a finite group.

The restriction of $\psi$ to the P$\Z-$subgroup is injective since poly-$\Z$-groups are torsion-free.
Then, $P\Z$ is isomorphic to a finite index subgroup of
the almost-crystallographic group $\overline{\Gamma}$. Thus, it is itself an almost-crystallographic group
with a finite index subgroup isomorphic to a uniform lattice of $N$. We conclude that $\Gamma$ contains a finite
index subgroup isomorphic to a uniform lattice of $N$.
\end{proof}
\begin{remark}
{\rm The main tool in proving Theorem \ref{th:gb} is Corollary \ref{cor:split}. Since $N$ is locally compact with a
left-invariant metric, it is complete. Then, de Rham decomposition implies that $\Iso(M\times N)=\Iso(M) \times \Iso(N)$ and therefore
gives an alternate proof.} \label{rem:new}
\end{remark}

We recall that two isomorphic groups of isometries, acting freely, properly discontinuously and cocompactly on $\R$, are
conjugated by an element of $\mbox{Aff}(\R)=\R \rtimes \mbox{GL}_1(\R)$. It is also true that two finite isomorphic groups
acting freely and isometrically on $S^1$ are equal. The following example implies that there is no similar rigidity for $S^1\times \R$.
More concretely, we find two isomorphic groups acting properly discontinuously, cocompactly and isometrically on $S^1\times \R$
such that the induced actions on $S^1$ and $\R$ are free, but these groups cannot be conjugated by an
element of $\Diffeo(S^1)\times \Diffeo(\R)$.

\begin{ex} {\rm Consider $S^1=\{e^{i\theta} \mid \theta \in \R \}$. Choose $\theta_1,\theta_2 \in \R\cmd \Q$ such that
$\theta_1\pm \theta_2\notin \Z$.
Let $\Gamma \subset \Iso(M\times N)$ be the group generated by $(\alpha_1,\alpha_2)$
where $\alpha_1:S^1 \rightarrow S^1$ is multiplication by $e^{2\pi i \theta_1}$ and
$\alpha_2: \R \rightarrow \R$, $x \mapsto x+1$. Analogously, let $\widetilde{\Gamma}$ be the group generated by
$(\beta_1,\beta_2)$ where $\beta_1:S^1\rightarrow S^1$ is multiplication by $e^{2\pi i\theta_2}$
and where $\beta_2=\alpha_2$. Clearly, both groups are infinite cyclic
and they act isometrically, properly discontinuously and cocompactly on $S^1\times \R$. Also, the induced actions
on $S^1$ and $\R$ are free. However, with little effort one can show that
$\langle \alpha_1 \rangle $ and $\langle \beta_1 \rangle$ are not conjugated by a diffeomorphism of $S^1$.}
\end{ex}

The third Bieberbach theorem does not generalize either.
There are infinitely many non-isomorphic groups acting isometrically, properly discontinuously and cocompactly on $S^1\times \{1\}$.
%

\subsection{Talelli's Conjecture}
\setcounter{theorem}{0}
Let us begin by recalling the definition of cohomological dimension.
\begin{definition}
The {\em cohomological dimension} of a group $\Gamma$ is defined by
\[ cd(\Gamma)=\sup \{n \mid H^n(\Gamma;M)\neq 0 \mbox{ for some } \Z\Gamma \mbox{-module } M \}. \]
\end{definition}
There are two definitions in literature for {\em periodic cohomology} of a group. We use the following
\begin{definition} A group $\Gamma$ has {\em periodic cohomology after $k$ steps} if there exists an integer $q>0$ such
that $H^i(\Gamma,-)$ and $H^{i+q}(\Gamma,-)$ are naturally isomorphic functors for all $i>k$.
\end{definition}
In $2005$, Talelli stated the following (Conjecture III of \cite{tal})
\begin{conjecture}
[\em (Talelli, 2005)] {\rm A torsion-free group $\Gamma$ that has periodic cohomology after some steps
has finite cohomological dimension.}
\end{conjecture}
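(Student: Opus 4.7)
The plan is to reduce the conjecture to the fiberwise volume decreasing theorem stated in the introduction (which is proved in Section~\ref{sc:main}). Concretely, if one can produce, from a torsion-free group $\Gamma$ with periodic cohomology, a properly discontinuous fvd action on a product $M\times N$ with $M$ closed connected and $N$ contractible, then the implication $cd(\Gamma)\leq \dim(N)$ given by that theorem immediately yields $cd(\Gamma)<\infty$.

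For the geometric realization step, assume $\Gamma$ has periodic cohomology after $k$ steps with period $q$, so there is a class $u\in H^q(\Gamma;\Z)$ whose cup product is the periodicity isomorphism in degrees $>k$. Following Mislin--Talelli in the $LH\mathcal{F}$ case, and using an appropriate Kropholler-style hierarchy of finite-index or subgroup reductions elsewhere, I would try to realize $u$ as the Euler class of an oriented $S^{q-1}$-bundle over a finite-dimensional model of $B\Gamma$, and then lift this bundle structure to a smooth properly discontinuous action of $\Gamma$ on $S^{q-1}\times \R^m$ for an appropriate $m$ bounded by the dimension of the base.

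The second step is to arrange the fvd condition. If the action can be chosen to preserve a Riemannian metric — for instance if the bundle gives a $\Gamma$-invariant product metric, or if $\Gamma$ is close enough to virtually nilpotent that averaging techniques apply — then fvd holds automatically because isometries are fvd. More generally one could attempt to rescale the spherical factor, or average a reference metric over a fundamental domain, but this requires uniform control on the orbit distortion, which in turn reflects the properness of the action. Once an fvd action is in hand, the theorem of Section~\ref{sc:main} applies and delivers $cd(\Gamma)\leq m<\infty$.

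The hard part is the first step. Outside the hierarchies where the conjecture is already known, there is no mechanism producing a smooth properly discontinuous $\Gamma$-action on $S^{q-1}\times\R^m$ from the sole assumption of periodic cohomology; in fact, constructing such an action is essentially equivalent to the conjecture itself. A complete proof would likely require either substantial new geometric input, or an algebraic shortcut yielding a finite-dimensional $\Z\Gamma$-resolution directly from the periodicity class. The contribution of the fvd framework developed in this paper is precisely to relax what counts as admissible geometric input: rather than insisting on a free action on a sphere, it suffices to produce \emph{any} properly discontinuous fvd action on a product whose non-compact factor is contractible, and the bound on $cd(\Gamma)$ follows with no completeness assumption on $N$.
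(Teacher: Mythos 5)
There is a basic mismatch here: the statement you were asked to prove is labeled a \emph{Conjecture} in the paper (Talelli, 2005), and the paper offers no proof of it. The authors only record that Mislin and Talelli have verified it for $LH\mathcal{F}$-groups, and they themselves prove a much weaker special case, namely: if a torsion-free $\Gamma$ acts properly discontinuously \emph{and fiberwise volume decreasingly} on $M\times N$ with $M$ closed connected and $N$ contractible, then $cd(\Gamma)\leq\dim(N)$. So there is no ``paper's own proof'' to compare against, and any complete proof you produced would be new mathematics.

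Your proposal is not such a proof, and to your credit you say so explicitly: your first step --- realizing the periodicity class $u$ as the Euler class of a sphere bundle and lifting it to a smooth properly discontinuous action on $S^{q-1}\times\R^m$ --- is, as you note, essentially equivalent to the conjecture outside the $LH\mathcal{F}$ hierarchy, and no mechanism is supplied. Two further gaps are worth naming concretely. First, even granting Adem--Smith, their theorem uses the cup-product definition of periodicity and countability of $\Gamma$, and the equivalence of the two periodicity definitions is itself only known for $H\mathcal{F}$-groups (and is another conjecture of Talelli), so the hypotheses do not line up. Second, the fvd condition is a genuine extra hypothesis, not a consequence of properness or smoothness: the paper's own example $f(\cos x,\sin x,y)=(\cos x,\sin x,y+\sin x)$ on $S^1\times\R$ is volume preserving yet not fvd, and averaging a metric over a fundamental domain of an infinite discrete group does not produce an invariant metric, so ``fvd holds automatically'' cannot be arranged by the methods you sketch. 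What your write-up does correctly capture is the logical role of the paper's theorem: it converts an fvd properly discontinuous action on $M\times N$ ($N$ contractible) into the bound $cd(\Gamma)\leq\dim(N)$. But that is the special case the paper proves, not the conjecture itself.
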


By a result of Mislin and Talelli (\cite{tal:2})
we know that this conjecture holds for the large class of $LH\mathcal{F}$-groups (see \cite{Kropholler}).
Among others, this class contains all linear and all elementary amenable groups.

In $2001$, Adem and Smith have proven that a countable group acts
freely, properly discontinuously and smoothly on some $S^n \times \R^k$ if and only if it has periodic cohomology.
Actually, they use the other definition of periodic cohomology which states
that the isomorphisms of cohomological functors are induced by a cup product map
(see \cite{Adem} for more details).
For the large class of $H\mathcal{F}$-groups it is known that these definitions are equivalent.
Furthermore, it has been
conjectured by Talelli that they are equivalent for all groups.
The Adem-Smith Theorem suggests the following slightly weaker reformulation of the Talelli conjecture.
\begin{conjecture}
[(Talelli reformulated, 2005)] 
{\rm  If $\Gamma$ is a torsion-free group that acts smoothly and
properly discontinuously on $S^n \times \R^k$, then $cd(\Gamma) \leq k$.}
\end{conjecture}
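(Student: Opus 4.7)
The plan is to attempt to reduce to the setting of Theorem \ref{th:main}: if the action could be shown to be fiberwise volume decreasing relative to some product metric on $S^n \times \R^k$, then the main theorem would produce a free properly discontinuous action of $\Gamma$ on $\R^k$, and since $\R^k$ is contractible we would obtain a classifying space $\R^k / \Gamma$ of dimension $k$, giving $cd(\Gamma) \leq k$.

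First, I would observe that $(S^n, \R^k)$ trivially satisfies the \aste-condition, since $\R^k$ is contractible and $\pi : S^n \times \R^k \to S^n$ is a homotopy equivalence inducing an isomorphism $\pi^* : H^n(S^n; \Z_2) \to H^n(S^n \times \R^k; \Z_2)$. Thus the purely topological input of Theorem \ref{th:main} is in place for any smooth action, regardless of metric.

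Second, I would fix a product metric on $S^n \times \R^k$ (round on $S^n$, Euclidean on $\R^k$) and try to establish fvd. For $\gamma \in \Gamma$ and $z \in \R^k$, the proof of Proposition \ref{pr:highervolume} expresses the induced volume of $\gamma(S^n \times \{z\})$ as an integral involving the derivatives of both components of $\gamma$. Proposition \ref{proposition:surj} guarantees that the $S^n$-projection $\pi \circ \gamma_{\mid S^n \times \{z\}}$ is surjective, and its contribution to the integral is already $\geq \vol(S^n)$ by the area formula. The fvd inequality therefore forces the $S^n$-projection to be a diffeomorphism and the $\R^k$-component derivative to vanish on the fiber; in other words, fvd is essentially equivalent to fiber preservation here, so one cannot hope to obtain fvd as a free gift and then feed it into Theorem \ref{th:main}.

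Third, since the metric reduction collapses, I would turn to a cohomological route. The action being free makes $X/\Gamma$ a smooth $(n+k)$-manifold whose universal cover $X = S^n \times \R^k$ is homotopy equivalent to $S^n$; the classifying fibration $S^n \to X/\Gamma \to B\Gamma$ yields a Serre spectral sequence $E_2^{p,q} = H^p(\Gamma; H^q(S^n)) \Rightarrow H^{p+q}(X/\Gamma)$ with only two non-trivial rows $q = 0, n$. Since $X/\Gamma$ is an $(n+k)$-manifold, $E_\infty^{p,0}$ vanishes for $p > n+k$, so every class of $H^p(\Gamma; \Z)$ in degree $p > n+k$ is killed by the transgression $d_{n+1} : E_{n+1}^{p-n-1, n} \to E_{n+1}^{p, 0}$. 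Iterating this transgression would push the non-vanishing range down by $n+1$ at each step, and the goal is to show it keeps cutting until $cd(\Gamma) \leq k$. The main obstacle is precisely this: without an extra structural input (such as triviality of $\Gamma$ on $H^n(S^n; \Z)$ combined with a class in $E_2^{0,n}$ that transgresses to an invertible element of $H^{n+1}(\Gamma; \Z)$), the spectral sequence gives only the weak bound $cd(\Gamma) \leq n+k$. Closing the gap down to $cd(\Gamma) \leq k$ seems to require exhibiting such a periodic cohomology structure for $\Gamma$ directly from the smooth proper action, and this is the genuinely hard step that the \aste-condition alone does not supply.
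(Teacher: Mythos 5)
The statement you were asked to prove is labeled a \emph{Conjecture} in the paper (Talelli reformulated, 2005), and the paper does not prove it --- nor is a proof known in general; the paper only cites Mislin--Talelli for the case of $LH\mathcal{F}$-groups. So the fact that your proposal ends without closing the argument is not a defect of execution but intrinsic to the problem, and your own diagnosis of where it breaks is accurate on both fronts. In your second paragraph you correctly observe that the fvd property cannot be extracted from mere smoothness of the action: the paper itself remarks that in this setting ``fiberwise volume decreasing'' and ``fiber preserving'' are equivalent notions, so assuming fvd amounts to assuming the conclusion about fibers that one wants. In your third paragraph, the two-row Serre spectral sequence of $S^n \to (S^n\times\R^k)/\Gamma \to B\Gamma$ yields exactly the periodicity $H^i(\Gamma;-)\cong H^{i+n+1}(\Gamma;-)$ for $i$ large (up to the twisting by the $\Gamma$-action on $H^n(S^n)$); that is, your route reduces the statement to Talelli's original conjecture that a torsion-free group with periodic cohomology after some steps has finite cohomological dimension --- which is precisely the open problem, not a step you could have been expected to fill.

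What the paper actually proves is the special case in which fvd is added as a hypothesis: if $\Gamma$ is torsion-free, acts properly discontinuously on $M\times N$ (with $M$ closed connected, $N$ contractible), and each $\gamma\in\Gamma$ acts as a fiberwise volume decreasing map, then $cd(\Gamma)\leq\dim(N)$. Its proof is exactly the reduction you sketch in your first paragraph: by Corollary \ref{cor:ses}, the map $\psi:\fvdc(M\times N)\rightarrow\Diffeo(N)$ is a well-defined homomorphism; the kernel of $\psi_{\mid\Gamma}$ preserves the compact fiber $M\times\{z\}$, hence is finite by proper discontinuity, hence trivial by torsion-freeness; so $\Gamma\cong\psi(\Gamma)$ acts freely and properly discontinuously on the contractible manifold $N$, giving $cd(\Gamma)\leq\dim(N)$. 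In short, your first paragraph is the paper's argument for its theorem, and the distance between that theorem and the conjecture as stated is exactly the fvd hypothesis you identified as unobtainable for free.
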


Now, let us replace $S^n$ by any closed, connected Riemannian manifold $M$ and replace $\R^k$
by any $k$-dimensional contractible Riemannian manifold $N$. We obtain the following generalization.
{\conjecture [(Petrosyan, 2007)]
{\rm If $\Gamma$ is a
torsion-free group acting smoothly and properly discontinuously on $M \times N$, then $cd(\Gamma)\leq \dim(N)$.}}\\

In \cite{Petrosyan}, Petrosyan has verified this conjecture in the case of $H\mathcal{F}$-group and when $N$ is $1$-dimensional.
We prove the following

\begin{theorem} Let $\Gamma$ be a torsion-free group that acts properly discontinuously on $M\times N$
where $M$ is closed and connected and where $N$ is contractible. If
each $\gamma \in \Gamma$ acts as a fiberwise volume decreasing map,
then $\Gamma$ acts freely and properly discontinuously on $N$. In particular, $cd(\Gamma)\leq \dim(N)$.
\end{theorem}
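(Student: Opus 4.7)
The plan is to use the Main Theorem to reduce the action of $\Gamma$ on $M\times N$ to an action on $N$ with the right properties. First, since $N$ is contractible, the Künneth formula gives $H^n(M\times N;\mathbb{Z}_2)\cong H^n(M;\mathbb{Z}_2)\otimes H^0(N;\mathbb{Z}_2)\cong H^n(M;\mathbb{Z}_2)$, and one checks that the isomorphism is induced by $\pi^*$, so $(M,N)$ satisfies the $\aste$-condition. Hence by Theorem \ref{th:mainin}, every $\gamma\in\Gamma$ maps fibers to fibers. As in Corollary \ref{cor:ses}, this produces a group homomorphism
\[ \psi:\Gamma\longrightarrow \Diffeo(N),\quad \gamma\longmapsto \widetilde{p\circ\gamma}, \]
which will be the action of $\Gamma$ on $N$ that we study.

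Next I would show that $\psi$ is injective. If $\gamma\in\ker\psi$ then $\gamma$ preserves every fiber $M\times\{z\}$ setwise, so in particular $\gamma^k(M\times\{z\})=M\times\{z\}$ for all $k\in\Z$. Taking the compact set $K=M\times\{z\}$, proper discontinuity of $\Gamma$ on $M\times N$ forces $\{\gamma^k\mid k\in\Z\}$ to be finite, hence $\gamma$ has finite order, hence $\gamma=1$ since $\Gamma$ is torsion-free. The same argument proves freeness of the $\Gamma$-action on $N$: if $\psi(\gamma)(z)=z$ for some $z\in N$, then $\gamma$ preserves $M\times\{z\}$ setwise and we conclude by the same compactness plus torsion-free argument.

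For proper discontinuity of the action on $N$, let $L\subset N$ be compact. Set $K=M\times L$, which is compact because $M$ is. If $\psi(\gamma)(z)=w$ for some $z,w\in L$, then $\gamma(M\times\{z\})=M\times\{w\}\subset K$, while $M\times\{z\}\subset K$, so $\gamma K\cap K\supset M\times\{w\}\neq\emptyset$. Thus
\[ \{\gamma\in\Gamma\mid \psi(\gamma)L\cap L\neq\emptyset\}\subset \{\gamma\in\Gamma\mid \gamma K\cap K\neq\emptyset\}, \]
which is finite by proper discontinuity on $M\times N$.

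Finally, because $\Gamma$ acts freely and properly discontinuously by diffeomorphisms on the contractible manifold $N$, the quotient $N/\Gamma$ is an aspherical manifold of dimension $\dim(N)$ which is a $K(\Gamma,1)$, giving $cd(\Gamma)\leq\dim(N)$. The only subtle point is the initial verification of the $\aste$-condition and the care needed in using the Main Theorem to get a well-defined homomorphism rather than a mere set map; everything else is routine manipulation of proper discontinuity applied to the compact fiber $M\times\{z\}$, which is the decisive geometric ingredient made available by Theorem \ref{th:mainin}.
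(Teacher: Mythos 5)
Your proof is correct and follows essentially the same route as the paper: verify the \aste-condition from the contractibility of $N$, use the Main Theorem (via Corollary \ref{cor:ses}) to obtain the homomorphism $\psi:\Gamma\rightarrow\Diffeo(N)$, and eliminate the kernel and the point stabilizers by applying proper discontinuity to the compact fibers $M\times\{z\}$ together with torsion-freeness. In fact you supply more detail than the paper, which merely asserts that the induced action on $N$ is free and properly discontinuous; your explicit arguments with $K=M\times\{z\}$ and $K=M\times L$ are exactly the intended justifications.
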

\begin{proof}
Let $y_0\in M$ and consider the map
\[ \begin{array}{cccl}
\psi: & \fvdc(M\times N) &\rightarrow & \Diff(N)\\
 & (\alpha,\beta) &\mapsto & \widetilde{\beta} ,
 \end{array} \]
where
\[ \begin{array}{cccl}
\widetilde{\beta}: & N & \rightarrow & N \\
&  z& \mapsto & \beta(y_0,z).
\end{array}
\]
By Corollary \ref{cor:ses}, we have that $\psi$ is a well-defined
epimorphism.

This gives us the following short exact sequence
\[ 1\rightarrow \Gamma_1 \rightarrow \Gamma \rightarrow \overline{\Gamma} \rightarrow 1, \]
where $\overline{\Gamma}=\psi(\Gamma)$ and $\Gamma_1$ is the kernel of $\psi_{\mid \Gamma}$.
Let $z\in N$ and observe that every element of $\Gamma_1$ maps
$M\times \{z\}$ onto itself.
Since $\Gamma$ acts properly discontinuously on $M \times N$ we have that $\Gamma_1$ is finite. Since $\Gamma$
is torsion-free,  $\Gamma_1$ must be trivial and therefore, $\Gamma \cong \overline{\Gamma}$.

Now, $\overline{\Gamma}$ acts freely, smoothly and properly discontinuously on $N$. Since $N$ is contractible,
we have $cd(\Gamma)\leq \dim(N)$.
\end{proof}


\end{document}